\newtheorem{thm}{Theorem}[section]
\newtheorem{lem}[thm]{Lemma}
\newtheorem{cor}[thm]{Corollary}
\newtheorem{defn}[thm]{Definition}
\newtheorem{rmk}{Remark}
\numberwithin{equation}{section}
\newcommand{\R}{\mathbb{R}}
\newcommand{\N}{\mathbb{N}}
\def\epsilon{\varepsilon}
\def\phi {\varphi}
\providecommand{\abs}[1]{\left\lvert#1\right\rvert}
\providecommand{\norm}[1]{\left\lVert#1\right\rVert}
\renewcommand{\leq}{\leqslant}
\renewcommand{\geq}{\geqslant}
\providecommand{\abs}[1]{\left\lvert#1\right\rvert}
\providecommand{\norm}[1]{\left\lVert#1\right\rVert}
\begin{document}

\title[existence  of solutions for semilinear fractional wave equations]{On existence and uniqueness of solutions for semilinear fractional wave equations }

\author[Yavar Kian]{Yavar Kian}
\address{Aix-Marseille Universit\'e, CNRS, CPT UMR 7332, 13288 Marseille, France \& Universit\'e de Toulon, CNRS, CPT UMR 7332, 83957 La Garde, France}
\email{yavar.kian@univ-amu.fr}

\author[Masahiro Yamamoto]{Masahiro Yamamoto}
\address{Graduate School of Mathematical Sciences,
The University of Tokyo, 3-8-9 Komaba Meguro, Tokyo 153-8914, Japan}
\email{myama@ms.u-tokyo.ac.jp}

\maketitle

\baselineskip 18pt

\begin{abstract}  Let $\Omega$ be a  $\mathcal C^2$-bounded domain of $\R^d$, $d=2,3$, and fix $Q=(0,T)\times\Omega$ with $T\in(0,+\infty]$. In the present paper we consider a Dirichlet initial-boundary value problem associated to the semilinear fractional wave equation $\partial_t^\alpha u+\mathcal A u=f_b(u)$ in $Q$ where $1<\alpha<2$, $\partial_t^\alpha$ corresponds to the Caputo fractional derivative of order $\alpha$, $\mathcal A$ is an elliptic operator and the nonlinearity $f_b\in \mathcal C^1( \R)$ satisfies $f_b(0)=0$ and $\abs{f_b'(u)}\leq C\abs{u}^{b-1}$ for some $b>1$. We first provide a definition of local  weak solutions of this problem by applying some properties of the associated linear equation $\partial_t^\alpha u+\mathcal A u=f(t,x)$ in $Q$. Then, we prove existence of  local solutions of the semilinear fractional wave equation for some suitable values of $b>1$. Moreover, we obtain an explicit dependence of the time of existence of solutions  with respect to the initial data that allows  longer time of existence for small initial data.

\end{abstract}

\section{Introduction}
\subsection{Statement of the problem}

Let $\Omega$ be a $\mathcal C^{2}$-bounded domain of $\R^d$ with $d=2,3$. In what follows, we define $\mathcal A$ by the differential operator 
\[\mathcal Au(x)=-\sum_{i,j=1}^d\partial_{x_i}\left(a_{ij}(x)\partial_{x_j}u\right)+V(x)u(x),\quad x\in\Omega,\]
where $a_{ij}=a_{ji}\in \mathcal C^1(\overline{\Omega})$ and $V\in L^\kappa(\Omega)$, for some $\kappa>d$, satisfy
\[\sum_{i,j=1}^da_{ij}(x)\xi_i\xi_j\geq c|\xi|^2,\quad x\in\overline{\Omega},\ \xi=(\xi_1,\ldots,\xi_d)\in\R^d\]
and $V\geq 0$ a.e. in $\Omega$.

We set $T\in(0,+\infty]$,  $\Sigma=(0,T)\times\partial\Omega$ and $Q=(0,T)\times\Omega$. We consider the   following initial-boundary
value problem (IBVP in short) for the fractional semilinear wave equation
\begin{equation}\label{eq2}
\left\{\begin{aligned}\partial_t^\alpha u+\mathcal A u=f_b(u),\quad &(t,x)\in 
Q,\\  
u(t,x)=0,\quad &(t,x)\in\Sigma,\\  u(0,x)=u_0(x),\ 
\partial_tu(0,x)=u_1(x),\quad &x\in\Omega,\end{aligned}\right.
\end{equation}
where  $1<\alpha<2$, $\partial_t^\alpha$ denotes the Caputo fractional derivative with respect to $t$, 
\[
\partial_t^\alpha u(t,x):=\frac{1}{\Gamma(2-\alpha)}\int_0^t(t-s)^{1-\alpha}\partial_s^{2} u(s,x) d s,\ (t,x) \in Q,
\]
$b>1$ and $f_b\in \mathcal C^1( \R)$ satisfies $f_b(0)=0$ and 
\[\abs{f_b'(u)}\leq C\abs{u}^{b-1},\quad u\in \R.\]
The main purpose of this paper is to give a suitable definition of solutions of \eqref{eq2} and to study the well-posedeness  of this problem. 

\subsection{Physical motivations and known results}

Recall that equation \eqref{eq2} is associated to anomalous diffusion 
phenomenon. More precisely, for $1 < \alpha < 2$,
the linear part of equation \eqref{eq2} is frequently used for super-diffusive model of anomalous diffusion such as diffusion in heterogeneous media.
In particular, in the linear case (i.e., $f_b \equiv 0$), some 
physical background is found in Sokolov, Klafter and Blumen \cite{SKB}.
As for analytical results in the case of $1<\alpha<2$, we refer to 
Mainardi \cite{Ma} as one early work, and
also to \S 6.1 in Kilbas, Srivastava and Trujillo \cite{KST},
\S 10.10 in Podlubny \cite{P}.
For $0 < \alpha < 1$, we define $\partial_t^{\alpha}u$ by 
$\partial_t^\alpha u(t,x):=\frac{1}{\Gamma(1-\alpha)}\int_0^t(t-s)^{-\alpha}
\partial_s u(s,x) d s$, and there are works in view of the theory of 
partial differential equations (e.g., Beckers and Yamamoto \cite{BY},
Luchko \cite{L}, Sakamoto and Yamamoto \cite{SY}).  
Such researches are rapidly  
developing and here we do not intend to give any comprehensive lists of 
references.  

In contrast to the wave equation, even  linear fractional wave equations 
are not well studied. In fact, few authors treated the well-posedness of 
the linear IBVP associated to \eqref{eq2} and to our best knowledge even 
the definition of weak solutions does not allow source term with low 
regularity. 
For a general study of the linear fractional wave equation and the regularity 
of solutions we refer to \cite{SY}. 
When we consider e.g., reaction effects in a super-diffusive model, 
we have to introduce a semilinear term.  

To the best knowledge of the authors,
there are no publications on fractional semilinear wave equations
by the Strichartz estimate which is a common technique for semilinear 
wave and Schr\"odinger equations.
In fact, for the wave equation ($\alpha=2$), the well-posedness of problem \eqref{eq2} has been studied by various authors. In the case $\Omega=\R^{k}$ with $k\geq3$ and $\mathcal A=-\Delta$, the global well-posedness has been proved both in the subcritical case   $1<b<1+\frac{4}{k-2}$  by Ginibre and Velo 
\cite{GV1}, 
and in the critical case $b=1+\frac{4}{k-2}$ by Grillarkis \cite{G} and,
Shatah and Struwe \cite{SS1,SS2}. For $\Omega=\R^2$, Nakamura and Ozawa 
\cite{NO1,NO2} proved  global well-posedness with exponentially growing nonlinearity. Without being exhaustive, for other results related to regularity of solutions or existence of solutions for more general semilinear hyperbolic equations we refer to \cite{G,GV2,Ka,Ki, LS}. In the case of  $\Omega$ a smooth bounded domain of $\R^3$, \cite{BLP} proved the global well-posedness  in the critical case $b=5$. In addition, following the strategy set by \cite{BLP}, \cite{IJ} treated the case of  $\Omega$ a smooth bounded domain of $\R^2$ with exponentially growing nonlinearity.

\subsection{Main results}

In order to give a suitable definition of solutions of \eqref{eq2} we first need to consider  the IBVP associated to the linear fractional  wave equation
\begin{equation}\label{eq1}
\left\{\begin{aligned}\partial_t^\alpha u+\mathcal A u=f(t,x),\quad &(t,x)\in 
Q,\\  u(t,x)=0,\quad &(t,x)\in\Sigma,\\  u(0,x)=u_0(x),\ \partial_tu(0,x)
=u_1(x),\quad &x\in\Omega.\end{aligned}\right.
\end{equation}
The present paper contains three main results. Our two first main results are related to properties of solutions of \eqref{eq1}, while our last result concerns the nonlinear problem \eqref{eq2}.

 Let us first remark that in contrast to usual derivatives, 
there is no exact integration by parts formula for fractional derivatives. 
Therefore, it is difficult to introduce the definition of weak solutions of \eqref{eq1} in the sense of distributions. To overcome this gap we give the following definition of weak solutions of \eqref{eq1}. Let $\mathds{1}_{(0,T)}(t)$ be the characteristic function of $(0,T)$.

\begin{defn}\label{d1} Let $u_0\in L^2(\Omega)$, $u_1\in H^{-1}(\Omega)$ and $f\in L^1(0,T;L^2(\Omega))$. We say that problem \eqref{eq1} admits a  weak solution if there exists $v\in L^\infty_{\textrm{loc}}(\R^+;L^2(\Omega))$ such that:\\
1) $v_{\vert Q}=u$ and $\inf\{\epsilon>0:\ e^{-\epsilon t}v\in L^1(\R^+; L^2(\Omega))\}=0$,\\
2) for all $p>0$ the Laplace transform $V(p)=\int_0^{+\infty}e^{-pt}v(t,.)dt$ with respect to $t$ of $v$ solves
\[\left\{\begin{aligned}(\mathcal A +p^\alpha)V(p)=F(p)+p^{\alpha-1}u_0+p^{\alpha-2}u_1,\quad &\textrm{in }\Omega,\\  V(p)=0,\quad &\textrm{on }\partial\Omega,\end{aligned}\right.\]
where $F(p)=\mathcal L[f(t,.)\mathds{1}_{(0,T)}(t)](p)=\int_0^{T}e^{-pt}f(t,.)dt$.\end{defn}
\begin{rmk} Recall (e.g. formula (2.140) page 80 of \cite{P}) that for $h\in \mathcal C^2(\R^+)$ satisfying $\inf\{\epsilon>0:\ e^{-\epsilon t}h^{(k)}\in L^1(\R^+),\ k=0,1,2\}=\epsilon_0$
we have
\[\mathcal L[\partial^\alpha h](p)=p^\alpha H(p)-p^{\alpha-1}h(0)-p^{\alpha-2}h'(0),\quad p>\epsilon_0,\]
where $H(p)=\mathcal L[ h](p)=\int_0^{+\infty}e^{-pt}h(t)dt$. Therefore,  for sufficiently smooth data $u_0,u_1, f$ (e.g. \cite{SY}) one can check that problem \eqref{eq1} admits a unique strong solution which is also a weak solution of \eqref{eq1}.\end{rmk}

Consider the operator $A$ acting on $L^2(\Omega)$ with domain 
$D(A)=\{g\in H^1_0(\Omega):\ \mathcal Ag\in L^2(\Omega)\}$ defined by 
$Au=\mathcal A u$, $u\in D(A)$. Recall that in view of the Sobolev embedding 
theorem (e.g. \cite[Theorem 1.4.4.1]{Gr}) the multiplication operator $u\mapsto Vu$ is bounded from $H^1(\Omega)$ to 
$L^2(\Omega)$. Thus,   we have $D(A)=H^2(\Omega)\cap H^1_0(\Omega)$. 
Moreover, by $V \ge 0$ in $\Omega$, 
the operator $A$ is a strictly positive selfadjoint operator with a compact resolvent. Therefore, the spectrum of $A$ consists of a non-decreasing sequence of strictly positive eigenvalues $(\lambda_n)_{n\geq1}$. Let us also introduce 
an orthonormal  basis in the Hilbert space $L^2(\Omega)$ of eigenfunctions $(\phi_n)_{n\geq1}$ of $A$ associated to the non-decreasing sequence of  eigenvalues $(\lambda_n)_{n\geq1}$. 
From now on, by $\left\langle .,.\right\rangle$, we denote 
the scalar product of $L^2(\Omega)$.  For all $s\geq 0$, we denote by $A^s$ the operator defined by 
\[A^s h=\sum_{n=1}^{+\infty}\left\langle h,\phi_n\right\rangle \lambda_n^s\phi_n,\quad h\in D(A^s)=\left\{h\in L^2(\Omega):\ \sum_{n=1}^{+\infty}\abs{\left\langle h,\phi_n\right\rangle}^2 \lambda_n^{2s}<\infty\right\}\]
and consider on $D(A^s)$ the norm
\[\|h\|_{D(A^s)}=\left(\sum_{n=1}^{+\infty}\abs{\left\langle h,\phi_n\right\rangle}^2 \lambda_n^{2s}\right)^{\frac{1}{2}},\quad h\in D(A^s).\]
By duality, we can also set $D(A^{-s})=D(A^s)'$ by identifying 
$L^2(\Omega)' = L^2(\Omega)$ which is a Hilbert space with 
the norm 
\[\norm{h}_{D(A^{-s})}=\left(\sum_{n=1}^\infty \left\langle h,\phi_n\right\rangle_{-2s}\lambda_n^{-2s}\right)^{\frac{1}{2}}.\]
Here $\left\langle .,.\right\rangle_{-2s}$ denotes the duality bracket 
between $D(A^{-s})$ and $D(A^s)$. Since $D(A^{1/2})
=H^1_0(\Omega)$, we identify $H^{-1}(\Omega)$ with $D(A^{-1/2})$. 

Using  eigenfunction expansions  we show our first main result where we state existence and uniqueness of weak solutions of \eqref{eq1}.
\begin{thm}\label{t1} Let $u_0\in L^2(\Omega)$, $u_1\in H^{-1}(\Omega)=D(A^{-\frac{1}{2}})$, $f\in L^1(0,T; L^2(\Omega))$. Then, problem \eqref{eq1} admits a unique weak solution $u\in \mathcal C([0,T]; L^2(\Omega))$ satisfying 
\begin{equation}\label{t1a}
\norm{u}_{\mathcal C([0,T]; L^2(\Omega))}\leq C(\norm{u_0}_{L^2(\Omega)}
+\norm{u_1}_{H^{-1}(\Omega)}+\norm{f}_{L^1(0,T;L^2(\Omega))}).
\end{equation}
Moreover, assuming that there exists $0<r<\frac{1}{4}$ such that $u_0\in H^{2r}(\Omega)$, we have $u\in W^{1,1}(0,T;L^2(\Omega))$ and
\begin{equation}\label{t11b}\norm{u}_{W^{1,1}(0,T; L^2(\Omega))}\leq C(\norm{u_0}_{H^{2r}(\Omega)}+\norm{u_1}_{H^{-1}(\Omega)}+\norm{f}_{L^1(0,T;L^2(\Omega))}).\end{equation}

\end{thm}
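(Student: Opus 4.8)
The plan is to construct the solution explicitly by expanding along the eigenbasis $(\phi_n)_{n\ge1}$ of $A$ and solving the resulting scalar fractional ODEs by Mittag--Leffler functions. Writing $u_{0,n}=\langle u_0,\phi_n\rangle$, $u_{1,n}=\langle u_1,\phi_n\rangle$ and $f_n(t)=\langle f(t,\cdot),\phi_n\rangle$, I would set
\[
u(t,x)=\sum_{n=1}^{+\infty}\Big(u_{0,n}E_{\alpha,1}(-\la_n t^\alpha)+u_{1,n}\,tE_{\alpha,2}(-\la_n t^\alpha)+\int_0^t(t-s)^{\alpha-1}E_{\alpha,\alpha}(-\la_n(t-s)^\alpha)f_n(s)\,ds\Big)\phi_n,
\]
where $E_{\alpha,\beta}(z)=\sum_{k\ge0}z^k/\Gamma(\alpha k+\beta)$. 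Using the classical Laplace transform identities $\mathcal L[t^{\beta-1}E_{\alpha,\beta}(-\la t^\alpha)](p)=p^{\alpha-\beta}/(p^\alpha+\la)$ for $\beta\in\{1,2,\alpha\}$, one checks term by term that the Laplace transform of the $\R^+$-extension $v$ of $u$ (obtained by replacing $f$ with $f\mathds{1}_{(0,T)}$) satisfies $V_n(p)=(F_n(p)+p^{\alpha-1}u_{0,n}+p^{\alpha-2}u_{1,n})/(\la_n+p^\alpha)$, i.e. $V(p)$ solves the elliptic boundary value problem of Definition \ref{d1}. Since the Mittag--Leffler terms with negative argument are bounded, $v$ is bounded on $\R^+$, hence $e^{-\ep t}v\in L^1(\R^+;L^2(\Omega))$ for every $\ep>0$ and the growth condition holds; uniqueness then follows from injectivity of the Laplace transform on this class.

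The quantitative estimates rest on the decay bound $\abs{E_{\alpha,\beta}(-x)}\le C/(1+x)$ for $x\ge0$, valid for every real $\beta$ when $0<\alpha<2$ (the argument $-x$ lies in the admissible sector because $\alpha\pi/2<\pi$). For \eqref{t1a} I would estimate the three contributions to $\norm{u(t)}_{L^2(\Omega)}^2=\sum_n\abs{u_n(t)}^2$ separately. The $u_0$-term uses $\abs{E_{\alpha,1}(-\la_n t^\alpha)}\le C$; the $u_1$-term uses the elementary maximization $\sup_{t\ge0}t/(1+\la_n t^\alpha)=C_\alpha\la_n^{-1/\alpha}\le C\la_n^{-1/2}$, where the last inequality is exactly where $\alpha<2$ enters, yielding control by $\norm{u_1}_{H^{-1}(\Omega)}^2=\sum_n\la_n^{-1}\abs{u_{1,n}}^2$; the source term is handled by Minkowski's integral inequality together with $\sup_{n,\tau}\tau^{\alpha-1}\abs{E_{\alpha,\alpha}(-\la_n\tau^\alpha)}\le C$. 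Continuity in $\mathcal C([0,T];L^2(\Omega))$ then follows because each modal coefficient is continuous and, by the same bounds and dominated convergence, the partial sums converge uniformly on $[0,T]$.

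For the $W^{1,1}$ bound \eqref{t11b} I would differentiate termwise using $\frac{d}{dt}E_{\alpha,1}(-\la_n t^\alpha)=-\la_n t^{\alpha-1}E_{\alpha,\alpha}(-\la_n t^\alpha)$, $\frac{d}{dt}[tE_{\alpha,2}(-\la_n t^\alpha)]=E_{\alpha,1}(-\la_n t^\alpha)$ and, for the convolution, $\frac{d}{dt}\int_0^t(t-s)^{\alpha-1}E_{\alpha,\alpha}(-\la_n(t-s)^\alpha)f_n(s)\,ds=\int_0^t(t-s)^{\alpha-2}E_{\alpha,\alpha-1}(-\la_n(t-s)^\alpha)f_n(s)\,ds$ (the boundary term vanishes since $\alpha>1$). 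The $u_1$-term is then bounded in $L^2(\Omega)$ by $Ct^{-\alpha/2}\norm{u_1}_{H^{-1}(\Omega)}$, which is integrable on $(0,T)$ precisely because $\alpha<2$; the source term is bounded, via Minkowski and $\abs{E_{\alpha,\alpha-1}(-x)}\le C/(1+x)$, by $C\int_0^t(t-s)^{\alpha-2}\norm{f(s)}_{L^2(\Omega)}\,ds$, which belongs to $L^1(0,T)$ since $\alpha-2>-1$. The decisive term is the $u_0$-contribution $-\la_n u_{0,n}t^{\alpha-1}E_{\alpha,\alpha}(-\la_n t^\alpha)$: writing $\la_n^2/(1+\la_n t^\alpha)^2=\la_n^{2r}\cdot\la_n^{2-2r}/(1+\la_n t^\alpha)^2$ and maximizing in $\la_n$ gives $\norm{\cdot}_{L^2(\Omega)}\le Ct^{-\alpha(1-r)}\norm{u_0}_{D(A^r)}$, so the time integrand is comparable to $t^{\alpha r-1}$, which is integrable for any $r>0$. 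Here the hypothesis $0<r<\tfrac14$ is used only to identify $H^{2r}(\Omega)=D(A^r)$ with equivalent norms (no boundary condition being seen below the trace threshold $2r<\tfrac12$), so that $\norm{u_0}_{D(A^r)}\le C\norm{u_0}_{H^{2r}(\Omega)}$.

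The main obstacle is the $u_0$-contribution to the time derivative: the factor $\la_n$ produced by differentiating $E_{\alpha,1}$ must be absorbed by the decay of $E_{\alpha,\alpha}$ and by integrating the resulting singularity at $t=0$, and the whole scheme only closes once $H^{2r}(\Omega)$ is identified with $D(A^r)$, which is exactly what forces $r<\tfrac14$. The remaining points---justifying termwise differentiation, interchanging sum and integral, and the endpoint integrability at $t=0$---are routine once the Mittag--Leffler decay estimates and this identification are in hand.
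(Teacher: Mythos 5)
Your proposal follows essentially the same route as the paper: eigenfunction expansion with the Mittag--Leffler representation, verification of the Laplace-transform characterization of Definition \ref{d1} (with uniqueness reduced to uniqueness for the elliptic problem $(\mathcal A+p^\alpha)W=0$ plus injectivity of the Laplace transform), the decay bound $\abs{E_{\alpha,\beta}(-x)}\leq C/(1+x)$ for the $\mathcal C([0,T];L^2)$ estimate, and the same termwise differentiation identities together with the identification $H^{2r}(\Omega)=D(A^r)$ for $2r<\tfrac12$ to absorb the factor $\lambda_n$ in the $u_0$-contribution to $\partial_t u$. The only difference is cosmetic (the paper justifies differentiating the convolution term via an $\epsilon$-truncation and a distributional limit, which you correctly flag as routine), so the argument is correct and matches the paper's proof.
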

 
Recall that for  $\gamma,r,s\geq0$, $1\leq p,q,\tilde{p},\tilde{q}\leq \infty$,  Strichartz estimates for solutions $u$ of \eqref{eq1} denotes estimates of the form
\[ \norm{u}_{\mathcal C([0,T]; H^{2r}(\Omega))}+\norm{u}_{L^p(0,T;L^q(\Omega))}\leq C(\norm{u_0}_{H^{2\gamma}(\Omega)}+\norm{u_1}_{H^{2s}(\Omega)}+\norm{f}_{L^{\tilde{p}}(0,T;L^{\tilde{q}}(\Omega))}).\]
It is well known that these estimates, introduced by \cite{St} and extended to the endpoints by \cite{KT} for both wave and Schr\"odinger equations, are  important tools in the study of well-posedness of nonlinear equations 
(e.g. \cite{BLP, G,GV2,IJ,Ki}). In the present paper we prove these estimates for solutions of \eqref{eq1}.
For this purpose, we consider $1\leq p,q\leq \infty$ and $0<\gamma<1$ satisfying:
\begin{equation}\label{st1} 
\begin{array}{l}   
1)\ \ q=\infty,\quad\textrm{for } \frac{d}{4}<\gamma<1,\\
2)\ \ 2<q<\infty,\quad\textrm{for } \gamma=\frac{d}{4},\\
3)\ \ q=\frac{2d}{d-4\gamma},\quad\textrm{for } 0<\gamma<\frac{d}{4}.\end{array}\end{equation}
\begin{equation}\label{st2}
\begin{array}{l}   
1)\ \  p<\frac{1}{1-\alpha(1-\gamma)},\quad\textrm{for }\gamma>1-\frac{1}{\alpha}, \\
2)\ \ p=\infty,\quad\textrm{for } \gamma\leq1-\frac{1}{\alpha}.
\end{array}
\end{equation}
Then, our second main result can be stated as follows.
\begin{thm}\label{t2} (Strichartz estimates) 
Assume that $1\leq p,q\leq \infty$ and $0<\gamma<1$ fulfill 
\eqref{st1}, \eqref{st2} and set 
$$
s=\max\left(0,\gamma-\frac{1}{\alpha}\right), \quad
r=\min\left(1-\frac{1}{\alpha},\gamma\right).
$$
Let $u_0\in D(A^{\gamma})$, $u_1\in D(A^{s})$, $f\in L^1(0,T; L^2(\Omega))$. 
Then, the unique weak solution $u$ of problem \eqref{eq1} is lying in 
$L^p(0,T;L^q(\Omega))\cap \mathcal C([0,T]; H^{2r}(\Omega))$ and fulfills 
estimate \begin{equation}\label{st3} \norm{u}_{\mathcal C([0,T]; H^{2r}(\Omega))}+\norm{u}_{L^p(0,T;L^q(\Omega))}\leq C(\norm{u_0}_{H^{2\gamma}(\Omega)}+\norm{u_1}_{H^{2s}(\Omega)}+\norm{f}_{L^1(0,T;L^2(\Omega))}).\end{equation}
Here the constant $C$ takes the form
\begin{equation}\label{tt2a}
C=C_0(1+T)^\delta,
\end{equation}
where  
\begin{equation}\label{del}
\delta=
\left\{\begin{array}{l}
\max\left(\alpha(1-\gamma)-1, 1-\alpha(\gamma-s),
1-\alpha(r-s), \alpha(1-r)-1\right),\ \textrm{for }p=\infty,\\
\max\left( {1\over p}, 1-\alpha(\gamma-s)+\frac{1}{p},1-\alpha(r-s), 
\alpha(1-r)-1, \alpha(1-\gamma)-1+\frac{1}{p}\right),\ \textrm{for } 
p<\infty\end{array}\right.
\end{equation}
and $C_0$ depends only on $\Omega,\gamma, d, \alpha, p$.
\end{thm}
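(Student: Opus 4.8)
The plan is to derive an explicit eigenfunction representation of the unique weak solution furnished by Theorem \ref{t1} and then to read off each required norm from the decay of the attached Mittag-Leffler functions. Writing $u_{0,n}=\langle u_0,\phi_n\rangle$, $u_{1,n}=\langle u_1,\phi_n\rangle$, $f_n(t)=\langle f(t,\cdot),\phi_n\rangle$ and projecting the Laplace-transformed equation of Definition \ref{d1} onto $\phi_n$, the scalar relation $(\lambda_n+p^\alpha)U_n(p)=F_n(p)+p^{\alpha-1}u_{0,n}+p^{\alpha-2}u_{1,n}$ together with the transform identity $\mathcal L[t^{\beta-1}E_{\alpha,\beta}(-\lambda t^\alpha)](p)=p^{\alpha-\beta}/(p^\alpha+\lambda)$, where $E_{\alpha,\beta}(z)=\sum_{k\geq0}z^k/\Gamma(\alpha k+\beta)$, identifies, by the uniqueness part of Theorem \ref{t1}, the weak solution as
\[
u(t)=\sum_{n\geq1}\Big(E_{\alpha,1}(-\lambda_n t^\alpha)u_{0,n}+tE_{\alpha,2}(-\lambda_n t^\alpha)u_{1,n}+\int_0^t(t-s)^{\alpha-1}E_{\alpha,\alpha}(-\lambda_n(t-s)^\alpha)f_n(s)\,ds\Big)\phi_n.
\]
Two ingredients then drive the estimate. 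First, the conditions \eqref{st1} on $q$ are exactly the thresholds of the Sobolev embedding $D(A^\gamma)=H^{2\gamma}(\Omega)\hookrightarrow L^q(\Omega)$ (e.g. \cite{Gr}), so that $\norm{u(t)}_{L^q(\Omega)}\leq C\norm{A^\gamma u(t)}_{L^2(\Omega)}$ and it suffices to control the latter. Second, for $1<\alpha<2$ the classical bound $\abs{E_{\alpha,\beta}(-x)}\leq C/(1+x)$, $x\geq0$ (e.g. \cite{P}), controls each multiplier; combined with the scaling substitution $\tau=\lambda_n^{1/\alpha}t$ it gives for the three kernels $E_{\alpha,1}$, $tE_{\alpha,2}$, $t^{\alpha-1}E_{\alpha,\alpha}$ the $L^p(0,\infty)$ frequency weights $\lambda_n^{-1/(\alpha p)}$, $\lambda_n^{-1/\alpha-1/(\alpha p)}$ and $\lambda_n^{-(1-1/\alpha)-1/(\alpha p)}$ respectively (the elementary convergence conditions $\alpha p>1$, $(\alpha-1)p>1$, $p\geq1$ at infinity being what eventually produce the positive powers of $T$).

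Inserting these weights and using the orthonormality of $(\phi_n)$ settles each term once the $\ell^2_n$ sum and the $L^p_t$ integral are interchanged. For $p\geq2$ this interchange is legitimate by Minkowski's inequality (passing from $L^p_t\ell^2_n$ to $\ell^2_n L^p_t$), and one checks that the $u_0$ and $u_1$ contributions are bounded by $\norm{u_0}_{H^{2\gamma}}$ and $\norm{u_1}_{H^{2s}}$ with room to spare, the exponent $s=\max(0,\gamma-1/\alpha)$ being precisely what the weight $\lambda_n^{-1/\alpha}$ of the $tE_{\alpha,2}$ kernel dictates. The decisive term is the Duhamel integral: since $f$ carries no spatial regularity, Young's inequality $\norm{k_n*f_n}_{L^p_t}\leq\norm{k_n}_{L^p_t}\norm{f_n}_{L^1_t}$ forces $\sup_n\lambda_n^{\gamma-(1-1/\alpha)-1/(\alpha p)}<\infty$, i.e. $\gamma\leq1-1/\alpha+1/(\alpha p)$. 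This high-frequency constraint (independent of $T$) is exactly the dichotomy \eqref{st2}: $p=\infty$ is admissible when $\gamma\leq1-1/\alpha$, whereas $p<(1-\alpha(1-\gamma))^{-1}$ is required when $\gamma>1-1/\alpha$.

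For the $\mathcal C([0,T];H^{2r})$ statement I would again bound $\norm{A^r u(t)}_{L^2(\Omega)}$ frequency by frequency. The role of $r=\min(1-1/\alpha,\gamma)$ is transparent: for the source term $\sup_n\lambda_n^r\sigma^{\alpha-1}\abs{E_{\alpha,\alpha}(-\lambda_n\sigma^\alpha)}\leq C\sigma^{\alpha(1-r)-1}$ is bounded on $(0,T)$ exactly when $r\leq1-1/\alpha$, so that the $L^1$-in-time source yields a bounded and, by dominated convergence applied to the continuous-in-$t$ Mittag-Leffler terms together with equicontinuity of the Duhamel term, a continuous $H^{2r}$-valued map; the cap $r\leq\gamma$ is imposed by $u_0\in H^{2\gamma}$ only. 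The explicit constant $C_0(1+T)^\delta$ then results from repeating every time integral on the finite interval $(0,T)$ rather than on $(0,\infty)$ and reading off the power of $T$ produced when the integrand fails to be integrable at infinity; the competing powers so generated are precisely the entries of the maximum \eqref{del}. The main obstacle I anticipate is the range $p<2$ (which genuinely occurs, e.g. for $\gamma$ close to $1$), where Minkowski's inequality runs the wrong way and the clean $\ell^2_n$–$L^p_t$ interchange fails; there one must argue more delicately — interpolating between the uniform ($p=\infty$) bound and the $p=2$ bound, or setting up a direct duality/$TT^*$ estimate for the space–time operator — while simultaneously keeping track of the sharp $T$-growth recorded in $\delta$.
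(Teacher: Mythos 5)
Your ingredients (the eigenfunction representation \eqref{sol}, the Mittag--Leffler bound of Lemma \ref{t3}, the Sobolev embedding under \eqref{st1}, Young's inequality for the Duhamel term) are the same as the paper's, but the way you assemble the frequencies introduces a genuine gap. You integrate each mode in time first, extracting $L^p(0,\infty)$ weights such as $\lambda_n^{-1/(\alpha p)}$, and then must pass from $L^p_t\ell^2_n$ to $\ell^2_n L^p_t$; Minkowski's inequality gives this only for $p\geq 2$, while the admissible range forces $p<2$ whenever $\gamma\geq 1-\frac{1}{2\alpha}$ (then $\frac{1}{1-\alpha(1-\gamma)}\leq 2$, so \emph{every} admissible $p$ is $<2$). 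Neither of your proposed repairs closes this: interpolating between the $p=2$ and $p=\infty$ bounds only produces exponents $p\in[2,\infty]$, and in this regime the $p=2$ endpoint is itself inadmissible (the relevant supremum over $n$ diverges), so there is nothing to interpolate from below; and a duality/$TT^*$ argument is exactly what the authors point out is unavailable, since the operators $S_j(t)$ do not form a (semi)group. As written, the proof therefore fails on a nonempty and in fact typical portion of the stated parameter range.

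The remedy is to be less ambitious, and it is what the paper does: do not integrate the kernels in time frequency by frequency. Instead prove the fixed-time operator bounds $\norm{S_1(t)u_0}_{D(A^\gamma)}\leq C\norm{u_0}_{D(A^\gamma)}$, $\norm{S_2(t)u_1}_{D(A^\gamma)}\leq Ct^{1-\alpha(\gamma-s)}\norm{u_1}_{D(A^s)}$ and $\norm{S_3(t)h}_{D(A^\gamma)}\leq Ct^{\alpha(1-\gamma)-1}\norm{h}_{L^2(\Omega)}$ (each follows from Lemma \ref{t3} applied inside the $\ell^2_n$ sum at fixed $t$, so no interchange of $t$-- and $n$--norms is ever needed), apply the Sobolev embedding pointwise in $t$, and only then take the $L^p(0,T)$ norm of the resulting scalar functions of $t$: directly for the free terms, and via the scalar Young inequality $\norm{g*h}_{L^p(0,T)}\leq\norm{g}_{L^p(0,T)}\norm{h}_{L^1(0,T)}$ with $g(t)=t^{\alpha(1-\gamma)-1}$ for the Duhamel term. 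In this route condition \eqref{st2} arises as the local integrability requirement $p(\alpha(1-\gamma)-1)+1>0$ for the singular kernel near $t=0$, not as a high-frequency constraint coming from integrability at $t=\infty$, and the powers of $T$ so produced are precisely the entries of \eqref{del}. This works uniformly for all admissible $1\leq p\leq\infty$. Your identification of $s$ and $r$ and your treatment of the $\mathcal C([0,T];H^{2r}(\Omega))$ norm are consistent with the paper.
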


In the last section we apply estimates \eqref{st3} to prove our last result which is related to the 
existence and uniqueness of local solutions of \eqref{eq2}. For this purpose, we first need to define local solutions of \eqref{eq2}. 
In section 2 (see also \cite{SY}), using the  eigenfunction expansions we introduce the operators
\[S_1(t)h=\sum_{k=1}^\infty E_{\alpha,1}(-\lambda_k t^\alpha)
\left\langle h,\phi_k\right\rangle\phi_n,\quad h\in L^2(\Omega),\]
\[S_2(t)h=\sum_{k=1}^\infty t E_{\alpha,2}(-\lambda_k t^\alpha)\left\langle h,
\phi_k\right\rangle\phi_n,\quad h\in L^2(\Omega),\]
\[S_3(t)h=\sum_{k=1}^\infty t^{\alpha-1} E_{\alpha,\alpha}
(-\lambda_k t^\alpha)\left\langle h,\phi_k\right\rangle\phi_k,\quad h
\in L^2(\Omega),\]
where for all $\alpha>0$, $\beta\in\R$, $E_{\alpha,\beta}$ denotes the 
Mittag-Leffler function given by
\[
E_{\alpha,\beta}(z)=\sum_{k=0}^\infty \frac{z^k}{\Gamma(\alpha k+\beta)}.
\] 
It is well known (e.g. \cite{BY,P,L,SY}) that  for all $t>0$ we have $S_j(t)
\in B(L^2(\Omega))$, $j=1,2,3$. Moreover, in view of Theorem \ref{t1}, for $u_0,u_1\in L^2(\Omega)$ and $f\in L^1(0,T;L^2(\Omega))$, 
the unique weak solution of \eqref{eq1} is given by 
\begin{equation}\label{sol} 
u(t)=S_1(t)u_0+S_2(t)u_1+\int_0^tS_3(t-s)f(s)ds.
\end{equation}
For all $T>0$, we introduce the space
\[
X_T=\mathcal C([0,T];L^2(\Omega))\cap L^{b}(0,T;L^{2b}(\Omega))
\]
with the norm
\[
\norm{v}_{X_T}=\norm{v}_{\mathcal C([0,T];L^2(\Omega))}
+ \norm{v}_{L^{b}(0,T;L^{2b}(\Omega))}.
\]
Recall that, by applying the H\"older inequality, one can check that 
for all $u, v\in X_T$ we have $f_b(u), f_b(v)\in L^1(0,T; L^2(\Omega))$ with
\begin{equation}\label{loc1}
\norm{f_b(u)}_{L^1(0,T;L^2(\Omega))}\leq C_b\norm{u}_{L^{b}(0,T;
L^{2b}(\Omega))}^b\leq C_b\norm{u}_{X_T}^b
\end{equation}
and
\begin{equation}\label{loc2} 
\norm{f_b(u)-f_b(v)}_{L^1(0,T;L^2(\Omega))}\leq C_b\norm{u-v}_{X_T}
(\norm{u}_{X_T}^{b-1}+\norm{v}_{X_T}^{b-1}),
\end{equation}
where the constant $C_b>0$ depends only on $b$, $f_b$. 
Therefore, in view of Theorem \ref{t1}, the map $\mathcal H_b$ defined by
\[
\mathcal H_b u(t)=\int_0^tS_3(t-s)f_b(u(s))ds,\quad u\in X_T
\]
is locally Lipschitz from $X_T$ to  $\mathcal C([0,T];L^2(\Omega))$.
\begin{defn} \label{d2}
Let $u_0, u_1\in L^2(\Omega)$ and $T>0$. We say that \eqref{eq2} admits a 
weak solution on $(0,T)$ if the map 
$\mathcal G_b: X_T\rightarrow \mathcal C([0,T];L^2(\Omega))$ defined by 
\[
\mathcal G_b u(t)=S_1(t)u_0+S_2(t)v_2+\int_0^tS_3(t-s)f_b(u(s))ds
\]
admits a fixed point $u\in X_T$. Such a fixed point $u\in X_T$ is called 
a weak solution to \eqref{eq2} on $(0,T)$. 
We say that problem \eqref{eq2} admits a  local weak solution if there exists 
$T>0$, depending on $u_0$, $u_1$, such that problem \eqref{eq2} admits 
a weak solution on $(0,T)$.
\end{defn}

Now we can state our result of existence and uniqueness of local solutions for 
\eqref{eq2}.
We recall that $\delta>0$ is given in \eqref{del}.

\begin{thm}\label{t4} 
Let $b> 1$ satisfy
\begin{equation}\label{t4a} 
\frac{d\alpha}{d\alpha+4(1-\alpha)}<b<\frac{d\alpha+4}{d\alpha+4(1-\alpha)}
\end{equation}
and let 
\begin{equation}\label{t4b}
\gamma=\frac{d(b-1)}{4b},\   q=2b,\  
s=\max(0,\gamma-\frac{1}{\alpha}),\  r=\min(1-\frac{1}{\alpha},\gamma),\ 
1\leq \ell <\frac{1}{2-\alpha}.
\end{equation}
Then, we can choose $p\in\left(b,\frac{1}{1-\alpha(1-\gamma)}\right)$ such that  for $u_0\in D(A^\gamma)$, $u_1\in D(A^s)$, $T_0>0$,  problem \eqref{eq2} admits a  local weak solution $u$ lying in $L^p(0,T;L^q(\Omega))\cap \mathcal C([0,T]; H^{2r}(\Omega))\cap W^{1,\ell}(0,T;L^2(\Omega))$ for some $T\leq T_0$ that takes the form
\begin{equation}\label{Tt4a}
T=\min\left[\left(\tilde{C}(\norm{u_0}_{H^{2\gamma}(\Omega)}
+ \norm{u_1}_{H^{2s}(\Omega)})\right)^{-\frac{p(b-1)}{p-b}}, T_0\right],
\end{equation}
where we set  
\begin{equation}\label{Tt4b}
\tilde{C}=\tilde{C}_0(1+T_0)^{\frac{\delta}{b-1}},
\end{equation}
and $\tilde{C}_0$ depends only on  $f_b$, $\Omega$, $\alpha$, $b$, $p$ and $d$.
 Moreover, this  local weak solution $u$ is a unique local weak 
solution of \eqref{eq2} lying in $L^p(0,T;L^q(\Omega))$ and satisfies 
\begin{equation}\label{Tt4c}
\norm{u}_{\mathcal C([0,T]; H^{2r}(\Omega))}+\norm{u}_{L^p(0,T;L^q(\Omega))}
+ \norm{u}_{W^{1,\ell}(0,T;L^2(\Omega))}\leq C(\norm{u_0}_{H^{2\gamma}(\Omega)}
+ \norm{u_1}_{H^{2s}(\Omega)}).
\end{equation}
Here the constant $C>0$ depends on $d$, $\Omega$, $f_b$, $b$, $T_0$, $p$, $\alpha$. 
\end{thm}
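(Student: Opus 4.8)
The plan is to produce the weak solution of \eqref{eq2} as a fixed point of the map $\mathcal G_b$ of Definition \ref{d2} by a contraction argument in the Strichartz space of Theorem \ref{t2}, and then to upgrade its time-regularity to $W^{1,\ell}$. The first step is to check that the exponents in \eqref{t4b} are admissible in Theorem \ref{t2}. With $\gamma=\frac{d(b-1)}{4b}$ one has $4\gamma=\frac{d(b-1)}{b}$, hence $d-4\gamma=\frac db$ and $\frac{2d}{d-4\gamma}=2b=q$, while $0<\frac{b-1}{b}<1$ gives $0<\gamma<\frac d4$; thus \eqref{st1} holds. Moreover a direct computation shows that the lower bound in \eqref{t4a} is equivalent to $\gamma>1-\frac1\alpha$ and the upper bound to $b<\frac{1}{1-\alpha(1-\gamma)}$. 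The first makes $1-\alpha(1-\gamma)>0$ (so $r=\min(1-\frac1\alpha,\gamma)=1-\frac1\alpha$ and we are in the first case of \eqref{st2}), and the second makes the interval $\left(b,\frac{1}{1-\alpha(1-\gamma)}\right)$ nonempty; any $p$ in it satisfies \eqref{st2} and $p>b$, so Theorem \ref{t2} applies on every $(0,T)$.

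Next I set $Y_T=\mathcal C([0,T];H^{2r}(\Omega))\cap L^p(0,T;L^q(\Omega))$ with the natural norm, write $N:=\norm{u_0}_{H^{2\gamma}(\Omega)}+\norm{u_1}_{H^{2s}(\Omega)}$, and split $\mathcal G_b u=w_{\mathrm{lin}}+\mathcal H_b u$ with $w_{\mathrm{lin}}(t)=S_1(t)u_0+S_2(t)u_1$. Two bounds drive the argument. First, $w_{\mathrm{lin}}$ is bounded in $Y_T$ by $C_1 N$ with $C_1$ uniform in $T\le T_0$ — in fact global in time, thanks to the polynomial decay of the Mittag-Leffler functions underlying Theorem \ref{t2} — so the $T_0$-dependence will enter only through the Duhamel term. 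Second, applying Theorem \ref{t2} with zero data and source $f_b(u)$ bounds $\mathcal H_b u$ in $Y_T$ by $C_0(1+T)^\delta\norm{f_b(u)}_{L^1(0,T;L^2)}$. The crucial nonlinear estimate comes from $f_b(0)=0$ and $\abs{f_b'}\le C\abs{u}^{b-1}$, which give $\abs{f_b(u)}\le C\abs{u}^b$ and $\norm{f_b(u(s))}_{L^2}\le C\norm{u(s)}_{L^{2b}}^b=C\norm{u(s)}_{L^q}^b$; integrating in $s$ and using $p>b$ through Hölder in time yields the gain
\[
\norm{f_b(u)}_{L^1(0,T;L^2)}\le C_b\,T^{1-\frac bp}\norm{u}_{L^p(0,T;L^q)}^b,
\]
and the pointwise bound $\abs{f_b(u)-f_b(v)}\le C\abs{u-v}(\abs{u}^{b-1}+\abs{v}^{b-1})$ together with the same Hölder step gives the matching Lipschitz estimate with prefactor $C_b T^{1-\frac bp}\big(\norm{u}_{L^p(0,T;L^q)}^{b-1}+\norm{v}_{L^p(0,T;L^q)}^{b-1}\big)$.

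The contraction is then routine. Working in the ball of $Y_T$ centered at $w_{\mathrm{lin}}$ whose radius $\rho$ is comparable to the $T_0$-free bound $C_1 N$ on $\norm{w_{\mathrm{lin}}}_{Y_T}$, the two estimates above show that $\mathcal G_b$ maps this ball into itself and is a strict contraction as soon as $C_0(1+T_0)^\delta C_b\,T^{1-\frac bp}\,\rho^{b-1}<\tfrac12$; here the exponent $1-\frac bp=\frac{p-b}{p}>0$ is decisive. Since $\rho$ is $T_0$-independent and proportional to $N$, solving this inequality for $T$ produces exactly a threshold of the form $\big(\tilde C N\big)^{-\frac{p(b-1)}{p-b}}$ in which the single factor $(1+T_0)^\delta$ coming from the Duhamel estimate appears to the power $\frac1{b-1}$, i.e. $\tilde C=\tilde C_0(1+T_0)^{\frac{\delta}{b-1}}$; capping at $T_0$ gives \eqref{Tt4a}–\eqref{Tt4b}. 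The Banach fixed point theorem furnishes a unique fixed point $u$ in this ball, and reinserting it into \eqref{st3} yields the $\mathcal C([0,T];H^{2r})$ and $L^p(0,T;L^q)$ parts of \eqref{Tt4c}.

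It remains to obtain the $W^{1,\ell}$ regularity and the uniqueness in the full class. For the former I differentiate the representation $u(t)=w_{\mathrm{lin}}(t)+\int_0^tS_3(t-s)f_b(u(s))\,ds$ and estimate $\partial_tu$ in $L^\ell(0,T;L^2)$ using the Mittag-Leffler asymptotics of $\partial_tS_j$; the only singular contribution is $\partial_t\big(t^{\alpha-1}E_{\alpha,\alpha}(-\lambda_k t^\alpha)\big)\sim t^{\alpha-2}$ in the Duhamel integral, and since $t^{\alpha-2}\in L^\ell$ near $0$ precisely when $\ell<\frac1{2-\alpha}$, a Young convolution estimate against $f_b(u)\in L^1(0,T;L^2)$ closes the bound and explains the restriction in \eqref{t4b}. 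For uniqueness among all weak solutions lying in $L^p(0,T;L^q)$ — not merely in the ball — I argue by continuation: two such solutions coincide on a maximal $[0,T^\ast]$, and on a short window $[T^\ast,T^\ast+\eta]$ the Lipschitz estimate with its $\eta^{1-\frac bp}$ gain is a contraction because the local $L^p(0,T;L^q)$ norms tend to $0$ as $\eta\to0$, forcing $T^\ast=T$. The main obstacle is the bookkeeping in the contraction step: obtaining the sharp exponent $\frac{\delta}{b-1}$ requires knowing that the homogeneous part carries a $T_0$-uniform Strichartz bound, so that only one factor $(1+T_0)^\delta$ survives, and the $W^{1,\ell}$ step requires the delicate integrable-singularity estimate that pins down $\ell<\frac1{2-\alpha}$.
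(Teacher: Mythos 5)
Your overall strategy coincides with the paper's: check that $\gamma=\frac{d(b-1)}{4b}$, $q=2b$ satisfy \eqref{st1}, that \eqref{t4a} is equivalent to $1-\frac{1}{\alpha}<\gamma$ together with $b<\frac{1}{1-\alpha(1-\gamma)}$ (so that $p\in\left(b,\frac{1}{1-\alpha(1-\gamma)}\right)$ exists), then contract $\mathcal G_b$ in $Y_T=\mathcal C([0,T];H^{2r}(\Omega))\cap L^p(0,T;L^q(\Omega))$ using the H\"older gain $T^{\frac{p-b}{p}}$ coming from $p>b$, and finally differentiate the eigenfunction expansion for the $W^{1,\ell}$ regularity. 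However, the step you yourself flag as decisive for the exponent in \eqref{Tt4b} is wrong: the homogeneous part $w_{\mathrm{lin}}(t)=S_1(t)u_0+S_2(t)u_1$ does \emph{not} carry a $T$-uniform (let alone global-in-time) bound in $Y_T$. Estimate \eqref{t2g} reads $\norm{w_{\mathrm{lin}}}_{L^p(0,T;L^q(\Omega))}\leq C(1+T)^{1-(\gamma-s)\alpha+\frac{1}{p}}(\norm{u_0}_{H^{2\gamma}(\Omega)}+\norm{u_1}_{H^{2s}(\Omega)})$, and this growth is genuine: the factor $t$ in $tE_{\alpha,2}(-\lambda t^\alpha)$ produces $\norm{S_2(t)u_1}_{H^{2\gamma}(\Omega)}\lesssim t^{1-(\gamma-s)\alpha}\norm{u_1}_{H^{2s}(\Omega)}$ with a nonnegative exponent, and even a time-bounded function has $L^p(0,T)$ norm growing like $T^{1/p}$ since $p<\infty$ here. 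So the radius $\rho$ of your ball is not $T_0$-independent, and your derivation of the power $\frac{\delta}{b-1}$ collapses. The paper instead bounds \emph{every} Strichartz constant on $(0,T)$, $T\leq T_0$, by $C'=C_0(1+T_0)^{\delta}$, sets $M=2C(\norm{u_0}_{H^{2\gamma}(\Omega)}+\norm{u_1}_{H^{2s}(\Omega)})$ and $T=\min\bigl((3CM^{b-1})^{-\frac{p}{p-b}},T_0\bigr)$; the structural form $T=(\tilde C N)^{-\frac{p(b-1)}{p-b}}$ with $\tilde C$ polynomial in $1+T_0$ survives, but not by the mechanism you describe.

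Two further points. In the $W^{1,\ell}$ step the Duhamel kernel $t^{\alpha-2}$ is not the only singular contribution: $\partial_t(S_1(t)u_0)$ contributes $-\lambda_k t^{\alpha-1}E_{\alpha,\alpha}(-\lambda_k t^{\alpha})u_{0,k}$, and after trading $\lambda_k^{1-\gamma}$ against the Mittag-Leffler decay one is left with $t^{\alpha\gamma-1}\lambda_k^{\gamma}u_{0,k}$, which belongs to $L^\ell(0,T)$ only because $\gamma>1-\frac{1}{\alpha}$ and $\ell<\frac{1}{2-\alpha}$ give $\ell(\alpha\gamma-1)>-1$; this is precisely where $u_0\in D(A^{\gamma})$ is used and it cannot be omitted. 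Finally, your continuation argument for uniqueness in $L^p(0,T;L^q(\Omega))$ must be run on the fixed-point identity itself — once two fixed points agree on $(0,T^{\ast})$ their difference equals $\int_{T^{\ast}}^{t}S_3(t-s)[f_b(u(s))-f_b(v(s))]ds$ — because the operators $S_j$ have no semigroup property and the equation cannot be restarted at $T^{\ast}$; phrased this way it works, and is in fact a cleaner route than the paper's, which only contracts in the ball $B_M'$ of $Z_T=\mathcal C([0,T];L^2(\Omega))\cap L^p(0,T;L^q(\Omega))$.
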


A direct consequence of Theorem \ref{t4} is the following.

\begin{cor}\label{c1} 
Assume that  conditions \eqref{t4a} and \eqref{t4b} are fulfilled. 
Let $u_0\in D(A^\gamma)$, $u_1\in D(A^s)$ satisfy 
\[
\left[\tilde{C}_0(\norm{u_0}_{H^{2\gamma}(\Omega)}+\norm{u_1}
_{H^{2s}(\Omega)})\right]^{-\frac{p(b-1)}{p(1+\delta)-b }}>1
\]
for some $b<p<\frac{1}{1-\alpha(1-\gamma)}$, where the constant $\tilde{C}_0$ 
is introduced in \eqref{Tt4b}. 
Then, for any $T>0$ satisfying
\begin{equation}\label{c1a}
T< \left[\tilde{C}_0(\norm{u_0}_{H^{2\gamma}(\Omega)}+\norm{u_1}
_{H^{2s}(\Omega)})\right]^{-\frac{p(b-1)}{p(1+\delta)-b }},
\end{equation}
problem \eqref{eq2} admits a  unique  weak solution $u$ on $(0,T)$ lying in 
$L^p(0,T;L^q(\Omega))\cap \mathcal C([0,T]; H^{2r}(\Omega))\cap 
W^{1,\ell}(0,T;L^2(\Omega))$.
\end{cor}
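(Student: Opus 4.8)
The plan is to obtain Corollary \ref{c1} as an immediate consequence of Theorem \ref{t4}, the only real work being to unwind how the existence time \eqref{Tt4a}--\eqref{Tt4b} converts into the explicit threshold \eqref{c1a}. Fix $T$ subject to \eqref{c1a} and abbreviate $N=\norm{u_0}_{H^{2\gamma}(\Omega)}+\norm{u_1}_{H^{2s}(\Omega)}$. I would apply Theorem \ref{t4} with the particular choice $T_0=T$, so that by \eqref{Tt4b} the constant becomes $\tilde C=\tilde C_0(1+T)^{\delta/(b-1)}$. The theorem then yields a weak solution of \eqref{eq2} on the interval $(0,T_{\mathrm{ex}})$, where
\[
T_{\mathrm{ex}}=\min\left[\left(\tilde C_0(1+T)^{\frac{\delta}{b-1}}N\right)^{-\frac{p(b-1)}{p-b}},\,T\right],
\]
this solution lying in $L^p(0,T_{\mathrm{ex}};L^q(\Omega))\cap\mathcal C([0,T_{\mathrm{ex}}];H^{2r}(\Omega))\cap W^{1,\ell}(0,T_{\mathrm{ex}};L^2(\Omega))$ and being unique in $L^p(0,T_{\mathrm{ex}};L^q(\Omega))$. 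Consequently it suffices to show that, under \eqref{c1a}, the first entry of the minimum dominates $T$, i.e. $T_{\mathrm{ex}}=T$; then the solution is defined on all of $(0,T)$ and the membership and uniqueness assertions of the corollary follow verbatim from those of Theorem \ref{t4}.

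Thus the whole argument reduces to the scalar inequality $\left(\tilde C_0(1+T)^{\delta/(b-1)}N\right)^{-\frac{p(b-1)}{p-b}}\ge T$. Since $p>b>1$ the exponent $a:=\frac{p-b}{p(b-1)}$ is positive, and raising both sides to the power $a$ turns this into
\[
\tilde C_0\,N\,(1+T)^{\frac{\delta}{b-1}}\,T^{\frac{p-b}{p(b-1)}}\le 1.
\]
The point is the elementary identity between exponents: with $c:=\frac{\delta}{b-1}$ one has $a+c=\frac{p(1+\delta)-b}{p(b-1)}$, whose reciprocal is exactly the exponent $\frac{p(b-1)}{p(1+\delta)-b}$ appearing in \eqref{c1a}. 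Writing $R:=\left[\tilde C_0N\right]^{-\frac{p(b-1)}{p(1+\delta)-b}}$, the standing hypothesis $R>1$ is equivalent to $\tilde C_0N<1$, while \eqref{c1a} reads $T<R$; moreover $\tilde C_0N=R^{-(a+c)}$ by definition of $R$.

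The hard part is to absorb the factor $(1+T)^{\delta/(b-1)}$, which is precisely what distinguishes the theorem's exponent $p-b$ from the corollary's exponent $p(1+\delta)-b$ and which originates in the $(1+T_0)^{\delta/(b-1)}$ dependence of the constant \eqref{tt2a}. To handle it I would use that, because $R>1$, the relevant range is $T<R$ with $R$ bounded below by $1$: the map $T\mapsto T^a(1+T)^c$ is increasing, so $T<R$ gives $T^a(1+T)^c<R^a(1+R)^c=R^{a+c}(1+R^{-1})^c\le 2^{c}R^{a+c}$. Substituting $\tilde C_0N=R^{-(a+c)}$ then yields $\tilde C_0N\,(1+T)^cT^a<2^{c}$, so the displayed inequality holds once the harmless constant $2^{c}=2^{\delta/(b-1)}$ is incorporated into $\tilde C_0$; this is legitimate since $\tilde C_0$ is only required to depend on $f_b,\Omega,\alpha,b,p,d$, and $2^{\delta/(b-1)}$ shares these dependencies through \eqref{del}. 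With $T_{\mathrm{ex}}=T$ established, existence on $(0,T)$, the regularity $u\in L^p(0,T;L^q(\Omega))\cap\mathcal C([0,T];H^{2r}(\Omega))\cap W^{1,\ell}(0,T;L^2(\Omega))$, and uniqueness in $L^p(0,T;L^q(\Omega))$ are all inherited directly from Theorem \ref{t4}, which completes the argument.
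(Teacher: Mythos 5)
Your proposal is correct and takes essentially the same route as the paper: both apply Theorem \ref{t4} with $T_0=T$ and reduce \eqref{c1a} to the scalar statement that the minimum in \eqref{Tt4a} is attained at $T$, via the exponent identity $\frac{\delta}{b-1}+\frac{p-b}{p(b-1)}=\frac{p(1+\delta)-b}{p(b-1)}$. The only difference is cosmetic: the paper handles the factor $(1+T_0)^{\delta/(b-1)}$ by assuming without loss of generality that $T\geq 1$ and replacing $T_0$ by $T_0+1$, whereas you use monotonicity of $T^{a}(1+T)^{c}$ together with the hypothesis $R>1$ (which even avoids the reduction to $T\geq 1$), both arguments absorbing the same harmless factor $2^{\delta/(b-1)}$ into $\tilde{C}_0$.
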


This last result means that for smaller initial data we obtain longer time 
of existence of weak solutions.

Let us remark that, this paper seems to be the first where the Definition \ref{d1} of weak solutions of \eqref{eq1} is considered. The main contribution of Definition \ref{d1} comes from the fact  that it allows well-posedness of  \eqref{eq1}  with weak conditions. Indeed, in contrast to other definitions of weak solutions for \eqref{eq1} (e.g. \cite[Definition 2.1]{SY} used by  \cite{SY} to prove existence of weak solutions of \eqref{eq1} with $f\in L^2(Q)$, $u_0\in L^2(\Omega)$, $u_1=0$ in \cite[ Corollary 2.5, 2.6]{SY}), applying Definition \ref{d1} we can show well-posedness of \eqref{eq1} with $f\in L^1(0,T;L^2(\Omega))$, $u_0\in L^2(\Omega)$ and $u_1\in H^{-1}(\Omega)$.   The choice of Definition \ref{d1} is inspired both by the analysis of \cite{P} and the connection between elliptic equations and fractional diffusion equations used by \cite{LIY}. Note also that Definition \ref{d1} plays an important role in the Definition \ref{d2} of weak solutions of \eqref{eq2}.

Let us observe that in contrast to the wave equation the solution of \eqref{eq1} are not described by a semigroup. Therefore, we can not apply many arguments that allow to improve the Strichartz estimates \eqref{st3} such as the $TT^*$ method of \cite{KT}. Nevertheless, we prove  local existence of solution of \eqref{eq2} with estimates \eqref{st3}. Note also that estimates \eqref{st3} are derived from suitable estimates of Mittag-Leffler functions.

To our best knowledge this paper is the first treating well-posedness for semilinear fractional wave equations. Contrary to semilinear wave equations, it seems difficult to give a suitable definition of the energy for \eqref{eq2}.  This is mainly due to that fact that, once again, there is no exact integration by parts formula for fractional derivatives as well as properties of composition and conjugation of the fractional Caputo derivative $\partial_t^\alpha$ (e.g. \cite[Section 2]{P}).   For this reason,   it seems complicate to derive global well-posedness from local well-posedness. However, using the explicit dependence with respect to $T$ of the constant in \eqref{st3} we can establish an explicit dependence of the time of existence $T$ of \eqref{eq2} with respect to the initial conditions $u_0$, $u_1$. From this result, we prove long time of existence for small initial data (see Corollary \ref{c1}).

\subsection{Outline}
The paper is composed of four sections.  In Section 2, we treat the well-posedness of the linear problem \eqref{eq1} and we show  Theorem 1.2. Then, in Section 3 we prove the Strichartz estimates associated to these solutions and given by  Theorem 1.3. Finally, in Section 4 we prove the local existence of solutions stated in Theorem 1.5 and Corollary 1.6.

\section{The linear equation}

The goal of this section is to prove Theorem \ref{t1}. 
For this purpose, for $k\geq 1$ we introduce $u_k\in \mathcal C(\R^+)$ 
defined by 
\begin{equation}\label{t3a} 
u_k(t)=E_{\alpha,1}(-t^\alpha\lambda_k)u_{0,k}+t E_{\alpha,2}
(-t^\alpha\lambda_k)u_{1,k}
+ \int_0^t(t-s)^{\alpha-1}E_{\alpha,\alpha}(-(t-s)^\alpha\lambda_k) f_k(s)ds,
\ t>0,
\end{equation}
where $u_{0,k}=\left\langle u_0,\phi_k\right\rangle$, $u_{1,k}=\left\langle u_1,\phi_k\right\rangle_{-1}$, $f_k(s)=\left\langle f(s),\phi_k\right\rangle
\mathds{1}_{(0,T)}(s)$.
We will show that $\underset{k\geq1}{\sum}u_k(t)\phi_k(x)$ converge to a weak 
solution of \eqref{eq1} and this weak solution is unique. Let us first recall 
the following estimates of the behavior of the Mittag-Leffler function.

\begin{lem}\label{t3} \emph{(Theorem 1.6, \cite{P})} 
If $0<\alpha<2$, $\beta\in\R$, $\pi\alpha/2<\mu<\min(\pi,\pi\alpha)$, then 
$$ 
|E_{\alpha,\beta}(z)|\leq \frac{C}{1+\abs{z}},\quad z\in\mathbb C,\ \mu
\leq|arg z|\leq \pi,
$$
where the constant $C>0$ depends only on $\alpha$, $\beta$, $\mu$.\end{lem}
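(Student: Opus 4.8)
The plan is to prove the stated decay through the classical Hankel-type contour representation of the Mittag-Leffler function, which is exactly the machinery underlying its asymptotic expansion away from the positive real axis. First I would record the integral representation
$$E_{\alpha,\beta}(z)=\frac{1}{2\pi\alpha i}\int_{\gamma(\epsilon;\delta)}\frac{e^{\zeta^{1/\alpha}}\,\zeta^{(1-\beta)/\alpha}}{\zeta-z}\,d\zeta,$$
valid for every $z$ lying strictly to the left of (outside) the contour $\gamma(\epsilon;\delta)$, which is made of the two rays $\arg\zeta=\pm\delta$, $\abs{\zeta}\geq\epsilon$, joined by the arc $\abs{\zeta}=\epsilon$, $\abs{\arg\zeta}\leq\delta$, traversed with increasing $\arg\zeta$. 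This identity comes from the series defining $E_{\alpha,\beta}$ by inserting the Hankel representation $1/\Gamma(s)=\frac{1}{2\pi i}\int_{Ha}e^{u}u^{-s}\,du$ of the reciprocal Gamma function into each term and interchanging the (absolutely convergent) sum with the integral; the interchange and the subsequent deformation onto $\gamma(\epsilon;\delta)$ are legitimate precisely when $0<\alpha<2$ and $\frac{\pi\alpha}{2}<\delta<\min(\pi,\pi\alpha)$.

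Given $z$ with $\mu\leq\abs{\arg z}\leq\pi$, I would fix once and for all an opening angle $\delta$ with $\frac{\pi\alpha}{2}<\delta<\mu$; such a $\delta$ exists because $\mu>\pi\alpha/2$, and it still satisfies $\delta<\mu<\min(\pi,\pi\alpha)$, so the representation applies and $z$ is strictly exterior to $\gamma(\epsilon;\delta)$. The argument then splits into two regimes. For $\abs{z}\leq 2\epsilon$ the bound is immediate: $E_{\alpha,\beta}$ is entire, hence bounded by some $M$ on the closed disk $\{\abs{z}\leq 2\epsilon\}$, while $\frac{1}{1+\abs{z}}\geq\frac{1}{1+2\epsilon}$ there, so the claimed inequality holds after enlarging $C$. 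The substance is the regime $\abs{z}\geq 2\epsilon$, which I treat by estimating the contour integral directly.

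On the two rays I would write $\zeta=\rho e^{\pm i\delta}$, so that $\zeta^{1/\alpha}=\rho^{1/\alpha}e^{\pm i\delta/\alpha}$ and $\abs{e^{\zeta^{1/\alpha}}}=e^{\rho^{1/\alpha}\cos(\delta/\alpha)}$; since $\frac{\pi\alpha}{2}<\delta<\pi\alpha$ forces $\frac{\pi}{2}<\delta/\alpha<\pi$, the cosine is a strictly negative constant, which simultaneously guarantees convergence of the representation and supplies an integrable, $z$-independent majorant $e^{\rho^{1/\alpha}\cos(\delta/\alpha)}\rho^{(1-\beta)/\alpha}$ for the numerator. The key geometric point is a uniform lower bound on the denominator: since $z$ and any $\zeta\in\gamma(\epsilon;\delta)$ have arguments differing by at least $\theta_0:=\mu-\delta>0$, the angle $\psi\in[0,\pi]$ between them satisfies $\cos\psi\leq\cos\theta_0$, whence
$$\abs{\zeta-z}^2=\abs{\zeta}^2+\abs{z}^2-2\abs{\zeta}\abs{z}\cos\psi\geq c_0^2\,\abs{z}^2$$
for a suitable constant $c_0=c_0(\theta_0)\in(0,1]$; on the arc one has instead $\abs{\zeta-z}\geq\abs{z}-\epsilon\geq\abs{z}/2$. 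Dividing the integrable numerator by these lower bounds and integrating over the arc and the two rays yields $\abs{E_{\alpha,\beta}(z)}\leq C_1/\abs{z}$ for $\abs{z}\geq2\epsilon$, with $C_1$ depending only on $\alpha,\beta,\delta$, hence only on $\alpha,\beta,\mu$. Combining the two regimes and absorbing constants gives $\abs{E_{\alpha,\beta}(z)}\leq C/(1+\abs{z})$ uniformly on $\mu\leq\abs{\arg z}\leq\pi$.

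The main obstacle I anticipate is not any single estimate but the bookkeeping at the interface of the two ingredients: justifying the contour representation (the term-by-term Hankel integral, the interchange of summation and integration, and the admissible range $\frac{\pi\alpha}{2}<\delta<\min(\pi,\pi\alpha)$) and then exploiting the freedom $\delta<\mu$ to keep $z$ strictly away from the contour so that the bound $\abs{\zeta-z}\geq c_0\abs{z}$ persists uniformly up to $\abs{\arg z}=\mu$. Choosing $\delta$ strictly below $\mu$ is exactly what prevents the pole $\zeta=z$ from approaching the contour as $\arg z\to\pm\mu$, and it is the device that makes all constants depend only on $\alpha,\beta,\mu$, as asserted.
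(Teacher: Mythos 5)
Your proposal is correct, and it coincides with the approach behind the paper's treatment: the paper does not prove this lemma at all but quotes it as Theorem 1.6 of Podlubny \cite{P}, whose proof is precisely your Hankel-type contour representation over $\gamma(\epsilon;\delta)$ with $\frac{\pi\alpha}{2}<\delta<\mu$, the splitting into $\abs{z}\leq 2\epsilon$ (where the entire function is trivially bounded) and $\abs{z}\geq 2\epsilon$, and the uniform lower bound $\abs{\zeta-z}\geq c_0\abs{z}$ coming from the angular gap $\mu-\delta>0$. Your key estimates check out (in particular the angle $\psi\in[0,\pi]$ between $z$ and any ray point is indeed at least $\mu-\delta$ in all sign configurations, and $\cos(\delta/\alpha)<0$ gives the integrable majorant), so the argument is a faithful reconstruction of the cited proof with all constants depending only on $\alpha$, $\beta$, $\mu$.
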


Applying Lemma 2.1, one can check that, for all $t>0$ and all 
$m,n\in\mathbb N^*$, we have

\[\begin{aligned}
\|\sum_{k=m}^nu_k(t)\phi_k\|_{L^2(\Omega)}\leq& C\norm{\sum_{k=m}^nu_{0,k}\phi_k}_{L^2(\Omega)}+Ct^{1-\frac{\alpha}{2}}\norm{\sum_{k=m}^n\frac{(\lambda_kt^\alpha)^{\frac{1}{2}}}{1+\lambda_kt^\alpha}\lambda_k^{-\frac{1}{2}}u_{1,k}\phi_k}_{L^2(\Omega)}\\
&+Ct^{\alpha-1}\int_0^T\norm{\sum_{k=m}^nf_k(s)\phi_k}_{L^2(\Omega)}ds.
\end{aligned}\]
Thus, for all $T_1>0$ we obtain
\[\begin{aligned}
\sup_{t\in(0,T_1)}\|\sum_{k=m}^nu_k(t)\phi_k\|_{L^2(\Omega)}\leq& C\norm{\sum_{k=m}^nu_{0,k}\phi_k}_{L^2(\Omega)}+C(T_1)^{1-\frac{\alpha}{2}}\norm{\sum_{k=m}^n\lambda_k^{-\frac{1}{2}}u_{1,k}\phi_k}_{L^2(\Omega)}\\
&+C(T_1)^{\alpha-1}\int_0^T\norm{\sum_{k=m}^nf_k(s)\phi_k}_{L^2(\Omega)}ds\end{aligned}\]
and it follows that
\[\lim_{m,n\to\infty}\sup_{t\in(0,T_1)}\|\sum_{k=m}^nu_k(t)\phi_k\|_{L^2(\Omega)}=0.\]
Therefore, for any $T_1>0$ the serie $\underset{k\geq1}{\sum}u_k(t)\phi_k$ 
converge uniformly in $t\in(0,T_1)$ to $v\in\mathcal C(\R^+,L^2(\Omega))$. In addition, for all $N\in\mathbb N^*$ and $t>0$, we have
\begin{equation}\label{t1b}\norm{\sum_{k=1}^Nu_k(t)\phi_k}_{L^2(\Omega)}\leq C\left(\|u_0\|_{L^2(\Omega)}+t^{1-\frac{\alpha}{2}}\|u_1\|_{D(A^{-\frac{1}{2}})}+Ct^{\alpha-1}\|f\|_{L^1(0,T;L^2(\Omega))}\right).\end{equation}
Here and henceforth $\N^*$ denotes the set of all the natural number $>0$. Therefore, we deduce
\[\inf\{\epsilon>0:\ e^{-\epsilon t}v\in L^1(\R^+; L^2(\Omega))\}=0 \]
and \eqref{t1b} implies that, for all $N\in\mathbb N^*$, $t>0$ and $p>0$, 
we obtain
\[\norm{\sum_{k=1}^Ne^{-pt}u_k(t)\phi_k}_{L^2(\Omega)}\leq C\left(e^{-pt}\|u_0\|_{L^2(\Omega)}+e^{-pt}t^{1-\frac{\alpha}{2}}\|u_1\|_{D(A^{-\frac{1}{2}})}+Ce^{-pt}t^{\alpha-1}\|f\|_{L^1(0,T;L^2(\Omega))}\right).\]
Then, an application of Lebesgue's dominated convergence for functions taking values in $L^2(\Omega)$ yields
\[V(p,.)=\mathcal L[v(t,.)](p)=\sum_{k=1}^\infty\mathcal L[u_k](p)\phi_k=\sum_{k=1}^\infty U_k(p,.)\]
with $U_k(p,.)=\mathcal L[u_k](p)\phi_k$.
Moreover, the properties of the Lalpace transform of the Mittag-Leffler function (e.g. formula (1.80) pp 21 of \cite{P}) imply
\[U_k(p)=\frac{p^{\alpha-1}u_{0,k}+p^{\alpha-2}u_{1,k}+F_k(p)}{p^\alpha+\lambda_k}\phi_k=(A+p^\alpha)^{-1}\left[(\left\langle p^{\alpha-1}u_0 +F(p),\phi_k\right\rangle+\left\langle p^{\alpha-2}u_1,\phi_k\right\rangle_{-1})\phi_k \right]\]
with $F_k(p)=\mathcal L[f_k](p)=\left\langle F(p),\phi_k\right\rangle$.
Thus $U_k(p,.)$ solves
\[\left\{\begin{aligned}(\mathcal A +p^\alpha)U_k(p)=\left(\left\langle p^{\alpha-1}u_0 +F(p),\phi_k\right\rangle+\left\langle p^{\alpha-2}u_1,\phi_k\right\rangle_{-1}\right)\phi_k,\quad &\textrm{in }\Omega,\\  U_k(p)=0,\quad &\textrm{on }\partial\Omega.\end{aligned}\right.
\]
Combining this with the fact that $u_0,u_1, F(p,.)\in H^{-1}(\Omega)=D(A^{-\frac{1}{2}})$, we deduce that $\underset{k\geq1}{\sum}U_k(p,.)$  converge in $H^1_0(\Omega)$ to $V(p,.)$ and $\underset{k\geq1}{\sum}(\mathcal A +p^\alpha)U_k(p)$ converge in $H^{-1}(\Omega)$ to $F(p)+p^{\alpha-1}u_0+p^{\alpha-2}u_1$. 
Therefore, $V(p,.)$ solves
\begin{equation}\label{t1c}
\left\{\begin{aligned}
(\mathcal A +p^\alpha)V(p)=F(p)+p^{\alpha-1}u_0+p^{\alpha-2}u_1,\quad &\textrm{in }\Omega,\\  V(p)=0,\quad &\textrm{on }\partial\Omega.\end{aligned}\right.
\end{equation}
Thus, $u=v_{|Q}$ is a weak solution of \eqref{eq1}. This proves the existence 
of weak solutions lying in $\mathcal C ([0,T]; L^2(\Omega))$ and by the same 
way we obtain estimate \eqref{t1a}. It remains to show  that this solution is unique and,  when $u_0\in H^{2r}(\Omega)$, that it is lying in $W^{1,1}(0,T;L^2(\Omega))$ and that it fulfills \eqref{t11b}.

We first prove the uniqueness of solutions. Let $v_1,v_2$ be two weak solutions of \eqref{eq1}. Then, for $j=1,2$,  there exist $w_j\in L^\infty_{\textrm{loc}}(\R^+;L^2(\Omega))$ such that: ${w_j}_{\vert Q}=u_j$, $\inf\{\epsilon>0:\ e^{-\epsilon t}w_j\in L^1(\R^+; L^2(\Omega))\}=0$ and, for all $p>0$, the Laplace transform $W_j(p)$ with respect to $t$ of $w_j$ solves \eqref{t1c}. Let $p>0$ and set $W(p)=W_1(p)-W_2(p)\in L^2(\Omega)$ and note that $W(p)$ solves
\[\left\{\begin{aligned}(\mathcal A +p^\alpha)W(p)=0,\quad &\textrm{in }\Omega,\\  W(p)=0,\quad &\textrm{on }\partial\Omega.\end{aligned}\right.\]
The uniqueness of the solution of this elliptic problem implies that $W(p)=0$. Therefore, for all $p>0$ we have $W_1(p)=W_2(p)$ which implies that $w_1=w_2$ and by the same way $v_1={w_1}_{\vert Q}={w_2}_{\vert Q}=v_2$. 
This proves the uniqueness.

From now on we assume that  $u_0\in H^{2r}(\Omega)$, for $r\in(0,1/4)$, and  we will show that $ u\in W^{1,1}(0,T;L^2(\Omega))$ and that it fulfills \eqref{t11b}. 
For this purpose, we establish the following lemmata.
Here we recall that $u_k, f_k, u_{0,k}, u_{1,k}$ appear in 
\eqref{t3a}.

\begin{lem}\label{l30}
For $\lambda>0$, $\alpha>0$ and positive integer $m \in \N^*$, we have
$$
\frac{d^m}{dt^m}E_{\alpha,1}(-\lambda t^{\alpha})
= -\lambda t^{\alpha-m}E_{\alpha,\alpha-m+1}(-\lambda t^{\alpha}), \quad
t > 0
$$
and
$$
\frac{d}{dt}(tE_{\alpha,2}(-\lambda t^{\alpha}))
= E_{\alpha,1}(-\lambda t^{\alpha}), \quad t > 0.
$$
\end{lem}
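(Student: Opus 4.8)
The plan is to work directly from the power-series definition $E_{\alpha,\beta}(z)=\sum_{k=0}^{\infty} z^k/\Gamma(\alpha k+\beta)$ and to differentiate term by term, so the first order of business is to license that interchange. For $t$ in a compact subinterval $[a,b]\subset(0,+\infty)$, the termwise $j$-th derivatives of $E_{\alpha,1}(-\lambda t^\alpha)=\sum_k (-\lambda)^k t^{\alpha k}/\Gamma(\alpha k+1)$ are bounded by $\lambda^k$ times a polynomial in $k$ of degree $j$ (produced by differentiating $t^{\alpha k}$) times $\max(a^{\alpha k-j},b^{\alpha k-j})$, divided by $\Gamma(\alpha k+1)$. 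Since $1/\Gamma(\alpha k+1)$ decays faster than any geometric sequence by Stirling's formula, these bounds are summable uniformly on $[a,b]$; hence the standard theorem on term-by-term differentiation applies, and iterating it up to order $m$ makes all the manipulations below legitimate for $t>0$.

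For the first identity I would differentiate the series for $E_{\alpha,1}(-\lambda t^\alpha)$ exactly $m$ times. The $k=0$ term is constant and drops out, while for $k\geq1$ one has $\frac{d^m}{dt^m}t^{\alpha k}=\alpha k(\alpha k-1)\cdots(\alpha k-m+1)\,t^{\alpha k-m}$. The key simplification is the functional equation $\Gamma(z+1)=z\Gamma(z)$, applied $m$ times in the form $\alpha k(\alpha k-1)\cdots(\alpha k-m+1)=\Gamma(\alpha k+1)/\Gamma(\alpha k-m+1)$, read through the entire function $1/\Gamma$ so that it stays valid when $\alpha k-m+1$ is a non-positive integer (both sides then vanishing). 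This collapses the $k$-th coefficient to $(-\lambda)^k/\Gamma(\alpha k-m+1)$, and the shift $j=k-1$ factors out $-\lambda t^{\alpha-m}$ and rebuilds exactly $E_{\alpha,\alpha-m+1}(-\lambda t^\alpha)$. Alternatively, one may first prove the slightly more general relation $\frac{d}{dt}\bigl[t^{\beta-1}E_{\alpha,\beta}(-\lambda t^\alpha)\bigr]=t^{\beta-2}E_{\alpha,\beta-1}(-\lambda t^\alpha)$ by a single term-by-term differentiation and then induct on $m$.

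The second identity is the special case $\beta=2$ of that general relation, but it is cleanest to check directly: writing $tE_{\alpha,2}(-\lambda t^\alpha)=\sum_{k\geq0}(-\lambda)^k t^{\alpha k+1}/\Gamma(\alpha k+2)$ and differentiating termwise produces the factor $\alpha k+1$, which cancels against $\Gamma(\alpha k+2)=(\alpha k+1)\Gamma(\alpha k+1)$ to leave precisely $E_{\alpha,1}(-\lambda t^\alpha)$. I expect the only genuinely delicate point to be the justification of exchanging differentiation and summation together with the bookkeeping around the poles of $\Gamma$ at the non-positive integers; the Stirling bound settles the former uniformly on compact subsets of $(0,+\infty)$, and systematically reading $1/\Gamma$ as the entire reciprocal-Gamma function (so that $1/\Gamma(-n)=0$ for $n\in\mathbb{N}$) removes any ambiguity in the coefficient identities, with the endpoint $t\to 0^+$ never entering since the statement concerns $t>0$.
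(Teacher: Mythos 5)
Your proposal is correct and follows essentially the same route as the paper, which simply observes that the power series for $E_{\alpha,1}(-\lambda t^{\alpha})$ and $tE_{\alpha,2}(-\lambda t^{\alpha})$ admit termwise differentiation for $t>0$ and that this yields both identities. Your version merely supplies the details the paper leaves implicit (the Stirling-type bound licensing the interchange, the repeated use of $\Gamma(z+1)=z\Gamma(z)$, and the reading of $1/\Gamma$ as an entire function at non-positive integers), all of which are accurate.
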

\begin{proof}
The power series defining $E_{\alpha,1}(-\lambda t^{\alpha})$ and
$tE_{\alpha,2}(-\lambda t^{\alpha})$ for $t>0$ admit
the termwise differentiation any times, and the termwise differentiation 
yields the conclusions.
\end{proof}

\begin{lem}\label{l3} 
For all $k\geq1$ and $1\leq \ell <\frac{1}{2-\alpha}$, we have 
$u_k\in W^{1,\ell}(0,T)$ and 
\begin{equation}\label{l3a}
\partial_tu_k(t)=-\lambda_k t^{\alpha-1}E_{\alpha,\alpha}(-t^\alpha\lambda_k)u_{0,k}+ E_{\alpha,1}(-t^\alpha\lambda_k)u_{1,k}+\int_0^t (t-s)^{\alpha-2}E_{\alpha,\alpha-1}(-(t-s)^\alpha\lambda_k) f_k(s)ds,
\end{equation}
for a.e. $t\in (0,T)$.
\end{lem}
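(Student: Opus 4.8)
The plan is to differentiate the three terms of $u_k$ in \eqref{t3a} separately and to bound each resulting term in $L^\ell(0,T)$. The first two terms are handled directly by Lemma \ref{l30}: with $m=1$ and $\lambda=\lambda_k$ it gives $\frac{d}{dt}E_{\alpha,1}(-t^\alpha\lambda_k)=-\lambda_k t^{\alpha-1}E_{\alpha,\alpha}(-t^\alpha\lambda_k)$ and $\frac{d}{dt}(tE_{\alpha,2}(-t^\alpha\lambda_k))=E_{\alpha,1}(-t^\alpha\lambda_k)$, which produce precisely the first two terms of \eqref{l3a}. By Lemma \ref{t3} both of these are, for fixed $k$, continuous and bounded on $[0,T]$ (note $t^{\alpha-1}\to0$ as $t\to0^+$ since $\alpha>1$), hence they belong to $L^\ell(0,T)$ for every $\ell\geq1$.

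The heart of the matter is the convolution term, which I write as $I(t)=\int_0^tg(t-s)f_k(s)ds$ with $g(\tau)=\tau^{\alpha-1}E_{\alpha,\alpha}(-\lambda_k\tau^\alpha)$. Differentiating the power series of $g$ termwise, as in the proof of Lemma \ref{l30}, gives
\[
g'(\tau)=\tau^{\alpha-2}E_{\alpha,\alpha-1}(-\lambda_k\tau^\alpha),\quad\tau>0,
\]
and, since $\alpha>1$ and $E_{\alpha,\alpha}(0)=1/\Gamma(\alpha)$, one has $g(0^+)=0$. The candidate kernel $g'$ is singular at the origin, so differentiation under the integral sign must be justified with care; this is the main obstacle. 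To circumvent it, I would use that $g$ is absolutely continuous on $[0,T]$ with $g(\tau)=\int_0^\tau g'(\sigma)d\sigma$ (the derivative being locally integrable because $\alpha-2>-1$), substitute this into $I$, and apply Tonelli's theorem (the resulting double integral over the triangle $\{0\le s\le\tau\le t\}$ is dominated by $\norm{g'}_{L^1(0,t)}\norm{f_k}_{L^1(0,t)}<\infty$) to obtain
\[
I(t)=\int_0^t\Big(\int_0^\tau g'(\tau-s)f_k(s)ds\Big)d\tau.
\]
This exhibits $I$ as the primitive of the convolution $g'*f_k$, whence $I$ is absolutely continuous and $I'(t)=(g'*f_k)(t)$ for a.e. $t$, which is exactly the third term of \eqref{l3a}.

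It remains to bound $I'$ in $L^\ell(0,T)$, and this is precisely where the restriction $\ell<\frac{1}{2-\alpha}$ enters. By Lemma \ref{t3} one has $\abs{g'(\tau)}\leq C\tau^{\alpha-2}$, and $\tau^{\alpha-2}\in L^\ell(0,T)$ if and only if $(\alpha-2)\ell>-1$, i.e. $\ell<\frac{1}{2-\alpha}$; note $\frac{1}{2-\alpha}>1$ since $\alpha>1$, so the admissible range is nonempty. Young's convolution inequality then yields
\[
\norm{I'}_{L^\ell(0,T)}=\norm{g'*f_k}_{L^\ell(0,T)}\leq\norm{g'}_{L^\ell(0,T)}\norm{f_k}_{L^1(0,T)}<\infty.
\]
Combining this with the boundedness of the first two terms gives the representation \eqref{l3a} together with $\partial_tu_k\in L^\ell(0,T)$; since $u_k$ is then the sum of absolutely continuous functions on $[0,T]$, it is itself absolutely continuous with a.e. derivative equal to its weak derivative, and we conclude $u_k\in W^{1,\ell}(0,T)$.
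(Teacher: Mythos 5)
Your proof is correct, and its overall skeleton matches the paper's: split off the homogeneous part, differentiate it with Lemma \ref{l30}, and obtain the integrability restriction $\ell<\frac{1}{2-\alpha}$ from the bound $\abs{(t-s)^{\alpha-2}E_{\alpha,\alpha-1}(-(t-s)^\alpha\lambda_k)}\leq C(t-s)^{\alpha-2}$ together with Young's convolution inequality. The genuine difference is in how you justify $\frac{d}{dt}\int_0^t g(t-s)f_k(s)\,ds=(g'*f_k)(t)$ for the weakly singular kernel $g(\tau)=\tau^{\alpha-1}E_{\alpha,\alpha}(-\lambda_k\tau^\alpha)$. The paper regularizes by truncating the integral at $t-\epsilon$, differentiates the truncated integral (which produces the boundary term $\epsilon^{\alpha-1}E_{\alpha,\alpha}(-\lambda_k\epsilon^\alpha)f_k(t-\epsilon)$, harmless as $\epsilon\to0$ precisely because $\alpha>1$), and passes to the limit in the sense of distributions $D'(0,T)$. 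You instead exploit the same two facts --- $g(0^+)=0$ and $g'\in L^1(0,T)$ --- to write $g$ as the primitive of $g'$ and then use Tonelli to exhibit the convolution term as a primitive of $g'*f_k$; this gives the identity $I'=g'*f_k$ a.e.\ and the absolute continuity of $I$ in one stroke, with no distributional limit. Your route is arguably cleaner and self-contained; the paper's truncation argument is the more standard regularization and would also survive kernels that do not vanish at the origin. Both arguments are complete and lead to \eqref{l3a} and $u_k\in W^{1,\ell}(0,T)$.
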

\begin{proof} 
First we consider the case $f_k=0$. Then, we have
\[
u_k(t)=E_{\alpha,1}(-t^\alpha\lambda_k)u_{0,k}+t E_{\alpha,2}
(-t^\alpha\lambda_k)u_{1,k},\ t>0.
\]
In view of Lemma 2.2, we see that $u_k\in \mathcal C^1([0,T])$ and
\eqref{l3a} is fulfilled. 

Second we consider the the case $u_{0,k}=u_{1,k}=0$. 
Introduce, for all $\epsilon>0$ the function
\[u_k^\epsilon(t)=\int_0^{t-\epsilon}(t-s)^{\alpha-1}E_{\alpha,\alpha}(-(t-s)^\alpha\lambda_k) f_k(s)ds,\quad 0<t<T.\]
In view of Lemma 2.2, we have $u_k^\epsilon\in W^{1,\ell}(0,T)$ and
\[
\partial_tu_k^\epsilon(t)=\epsilon^{\alpha-1} E_{\alpha,\alpha}(-\lambda_k\epsilon^\alpha)f_k(t-\epsilon)+\int_0^{t-\epsilon} (t-s)^{\alpha-2}E_{\alpha,\alpha-1}(-(t-s)^\alpha\lambda_k) f_k(s)ds,\ a.a.\ t\in(0,T).
\]
On the other hand, one can easily check that $(u_k^\epsilon)_{\epsilon>0}$ 
converge to $u_k$ as $\epsilon\to 0$ in $D'(0,T)$ and $(\partial_tu_k^\epsilon)_{\epsilon>0}$ converge to 
\[t\mapsto\int_0^{t} (t-s)^{\alpha-2}E_{\alpha,\alpha-1}(-(t-s)^\alpha\lambda_k) f_k(s)ds\]
as $\epsilon\to 0$ in $D'(0,T)$, where $D'(0,T)$ is 
the space of distributions in $(0,T)$. 
Therefore, in the sense of $D'(0,T)$ we have
\[
\partial_tu_k(t)=\int_0^{t} (t-s)^{\alpha-2}E_{\alpha,\alpha-1}
(-(t-s)^\alpha\lambda_k) f_k(s)ds,\quad 0<t<T,
\]
which implies \eqref{l3a}. In addition, applying \eqref{t3a}, we obtain
 \[\abs{\partial_tu_k(t)}\leq C\int_0^{t} (t-s)^{\alpha-2}\abs{ f_k(s)}ds.\]
Then, according to the Young inequality, we deduce that 
$\partial_tu_k\in L^l(0,T)$. 
Therefore, we have $u_k,\partial_tu_k\in L^l(0,T)$, 
which means  that $u_k\in W^{1,\ell}(0,T)$. Combining these two cases, 
we complete the proof of Lemma \ref{l3}.
\end{proof}
Let us remark that, using the fact that $0<2r<\frac{1}{2}$ and 
$D(A^{\frac{1}{2}})=H^1_0(\Omega)$, one can check by interpolation 
that $u_0\in H^{2r}(\Omega)=H^{2r}_0(\Omega)=D(A^r)$ (e.g.
\cite[Chapter 1, Theorems 11.1 and 11.6]{LM1}) and
\begin{equation}\label{t1h}
\sum_{k=1}^\infty \lambda_k^{2r}\abs{u_{0,k}}^2\leq C\norm{u_0}^2
_{H^{2r}(\Omega)}.
\end{equation}
In view of \eqref{l3a}, applying our previous arguments, for all $m,n\in\mathbb N^*$, we obtain
\[\begin{aligned}
\norm{\sum_{k=m}^n\partial_t u_k\phi_k}_{L^1(0,T;L^2(\Omega))}
\leq& C\norm{\sum_{k=m}^n\frac{(\lambda_kt^\alpha)^{1-r}}
{1+(\lambda_k^{\frac{1}{\alpha}}t)^\alpha}t^{\alpha r-1}
\lambda_k^r u_{0,k}\phi_k}_{L^1(0,T;L^2(\Omega))}\\
\ &+C\norm{\sum_{k=m}^n\frac{(\lambda_kt^\alpha)^{\frac{1}{2}}}{1+(\lambda_
k^{\frac{1}{\alpha}}t)^\alpha}t^{-\frac{\alpha}{2}}\lambda_k^{-\frac{1}{2}}
u_{1,k}\phi_k}_{L^1(0,T;L^2(\Omega))}\\
&+C\int_0^T\int_0^t(t-s)^{\alpha-2}\norm{\sum_{k=m}^nf_k(s)\phi_k}_{L^2(\Omega)}dsdt.
\end{aligned}\]
The Young inequality implies
\[
\begin{aligned}\norm{\sum_{k=m}^n\partial_t u_k\phi_k}_{L^1(0,T;L^2(\Omega))}\leq& C\frac{T^{\alpha r}}{\alpha r}\norm{\sum_{k=m}^n\lambda_k^r u_{0,k}\phi_k}_{L^2(\Omega)}+C\frac{T^{1-\frac{\alpha}{2}}}{1-\frac{\alpha}{2}}\norm{\sum_{k=m}^n\lambda_k^{-\frac{1}{2}}u_{1,k}\phi_k}_{L^2(\Omega)}\\
&+C\frac{T^{\alpha-1}}{\alpha-1}\norm{\sum_{k=m}^nf_k(s)\phi_k}_{L^1(0,T;L^2(\Omega))}.
\end{aligned}
\]
Thus, we have 
\[
\lim_{m,n\to+\infty}\norm{\sum_{k=m}^n\partial_t u_k\phi_k}
_{L^1(0,T;L^2(\Omega))}=0,
\]
which means that $\sum^n_{k=1}\partial_tu_k(t)\phi_k(x)$ is a Cauchy sequence 
and a convergent sequence in $L^1(0,T;L^2(\Omega))$. 
Since $\sum_{k=1}^n u_k(t)\phi_k(x)$ converge to $u$ in 
$\mathcal C([0,T]; L^2(\Omega))$, combining this with \eqref{t1b}, 
we deduce that $\underset{k\geq1}{\sum}u_k(t)\phi_k(x)$ converge to $u$ in $W^{1,1}(0,T;L^2(\Omega))$. Finally, repeating our previous arguments and applying \eqref{t1h}, for all $N\in\mathbb N^*$, we find
\[
\begin{aligned}\norm{\sum_{k=1}^Nu_k\phi_k}_{W^{1,1}(0,T;L^2(\Omega))}
\leq& C(\norm{u_0}_{H^{2r}(\Omega)}+\norm{u_1}_{H^{-1}(\Omega)}+\norm{f}_{L^1(0,T;L^2(\Omega))}).\end{aligned}
\]
Then, combining this estimate with \eqref{t1b} and taking the limit 
$N\to \infty$, we deduce \eqref{t1a}. 
Thus, the proof of Theorem 1.2 is completed.

\section{Strichartz estimates}

The goal of this section is to show Theorem \ref{t2}. 
We divide the proof of Theorem \ref{t2} into two steps. First we prove estimates \eqref{st3} for the weak solution $u$ of 
\eqref{eq1} with $f=0$ and then for $u_0=u_1=0$.  Henceforth $C>0$ denotes 
generic constants which are
dependent only on $\Omega$, $d$, $\alpha$, $\gamma$.

\textbf{First step:} Let $f=0$ and let $1\leq p,q\leq\infty$, $0<\gamma<1$ fulfill \eqref{st1} and \eqref{st2}. Then, \eqref{sol} implies that
\[u(t)=S_1(t)u_0+S_2(t)u_1.\]
Applying estimate Lemma 2.1, 
we deduce that for $t\mapsto S_1(t)u_0\in \mathcal C([0,T]; D(A^\gamma))\subset \mathcal C([0,T]; H^{2\gamma}(\Omega))$ with
\begin{equation}\label{t2b}
\norm{ S_1(t)u_0}_{H^{2\gamma}(\Omega)}\leq C\norm{ S_1(t)u_0}_{D(A^\gamma)}
\leq C\norm{u_0}_{D(A^\gamma)}\leq C\norm{u_0}_{H^{2\gamma}(\Omega)},\quad 
0<t<T.
\end{equation}
We have $0\le \gamma - s < 1$ by the definition of $\gamma, s$.
Therefore, in the same way, Lemma 2.1 yields that, for all $0<t<T$, we have
\[\lambda_k^{2\gamma}\abs{tE_{\alpha,2}(-\lambda_k t^\alpha)\left\langle u_1,\phi_k\right\rangle}^2\leq Ct^{2(1-(\gamma-s)\alpha)}\lambda_k^{2s}\abs{\left\langle u_1,\phi_k\right\rangle}^2\left(\frac{(\lambda_k t^\alpha)^{\gamma-s}}{1+\lambda_k t^\alpha}\right)^2.\]
Thus, for all $0<t<T$, we deduce that $S_2(t)u_1\in D(A^\gamma)\subset H^{2\gamma}(\Omega)$ with
\begin{equation}\label{t2c}
\norm{ S_2(t)u_1}_{H^{2\gamma}(\Omega)}\leq Ct^{1-(\gamma-s)\alpha}\norm{u_1}_{H^{2s}(\Omega)},\quad 0<t<T.
\end{equation}
By the Sobolev embedding theorem, for all $0<t<T$, we have $u(t,.)
\in H^{2\gamma}(\Omega) \subset L^q(\Omega)$ and
\[
\norm{u(t,.)}_{L^q(\Omega)} \leq C\norm{u(t,.)}_{H^{2\gamma}(\Omega)}
\leq C\max\left(t^{1-(\gamma-s)\alpha},1\right)(\norm{u_0}_{H^{2\gamma}(\Omega)}+\norm{u_1}_{H^{2s}(\Omega)}).
\]
On the other hand, we have $1-(\gamma-s)\alpha\geq 0$ and so 
$u\in L^\infty(0,T; L^q(\Omega))$ and  
\begin{equation}\label{t2g} 
\norm{u}_{L^p(0,T;L^q(\Omega))}\leq C(1+T)^{1-(\gamma-s)\alpha
+\frac{1}{p}}(\norm{u_0}_{H^{2\gamma}(\Omega)}+\norm{u_1}_{H^{2s}(\Omega)}).
\end{equation}
In the same way, we have
\[
\norm{ S_1(t)u_0}_{H^{2r}(\Omega)}\leq C\norm{ S_1(t)u_0}
_{H^{2\gamma}(\Omega)}\leq C\norm{u_0}_{H^{2\gamma}(\Omega)},\quad 0<t<T,
\]
\begin{equation}\label{3.4}
\norm{ S_2(t)u_1}_{H^{2r}(\Omega)}\leq C(1+T)^{1-(r-s)\alpha}\norm{u_1}
_{H^{2s}(\Omega)},\quad 0<t<T.
\end{equation}
Combining these two estimates in \eqref{3.4} with \eqref{t2g}, 
we deduce \eqref{st3} for $f=0$.

\textbf{Second step:} Let $u_0=u_1=0$. In view of Lemma 2.1, 
for all $0<t<T$, we have
\[
\lambda_k^{2\gamma}\abs{t^{\alpha-1}E_{\alpha,\alpha}(-\lambda_k t^\alpha)
\left\langle f,\phi_k\right\rangle}^2
\leq t^{2(\alpha(1-\gamma)-1)}\abs{\left\langle f,\phi_k\right\rangle}^2
\left(\frac{(\lambda_k t^\alpha)^{\gamma}}{1+\lambda_k t^\alpha}\right)^2.
\]
Thus, for all $0<t<T$ and $h\in L^2(\Omega)$, we deduce that $S_3(t)h\in D(A^\gamma)\subset H^{2\gamma}(\Omega)$ with
\[\norm{ S_3(t)h}_{H^{2\gamma}(\Omega)}\leq Ct^{\alpha(1-\gamma)-1}\norm{h}_{L^2(\Omega)},\quad 0<t<T.\]
By the Sobolev embedding theorem, for all $0<t<T$, we have $ S_3(t)h\in H^{2\gamma}(\Omega) \subset L^q(\Omega)$ with
\[\norm{S_3(t)h}_{L^q(\Omega)} \leq C\norm{S_3(t)h}_{H^{2\gamma}(\Omega)}\leq Ct^{\alpha(1-\gamma)-1}\norm{h}_{L^2(\Omega)}.\]
Applying this estimate, we obtain 
%\begin{equation}\label{t2d}
$$
\norm{u(t,.)}_{L^q(\Omega)} \leq \int_0^t\norm{S_3(t-s)f(s)}_{L^q(\Omega)}ds\leq C \int_0^t(t-s)^{\alpha(1-\gamma)-1}\norm{f(s)}_{L^2(\Omega)}ds.
$$
%\end{equation}
By $t\mapsto t^{\alpha(1-\gamma)-1}\in L^p(0,T)$, the Young inequality yields
\begin{equation}\label{t2f}
\norm{u}_{L^p(0,T;L^q(\Omega))}\leq  C\frac{T^{\alpha(1-\gamma)-1+\frac{1}{p}}}
{(p(\alpha(1-\gamma)-1)+1)^{1/p}}\norm{f}_{L^1(0,T;L^2(\Omega))}.
\end{equation}
Repeating the above arguments, we deduce that
\[\norm{u(t,.)}_{H^{2r}(\Omega)}\leq C \int_0^t(t-s)^{\alpha(1-r)-1}\norm{f(s)}_{L^2(\Omega)}ds.\]
Then, since $\alpha(1-r)-1\geq \alpha(1-(1-\alpha^{-1}))-1=0$, we deduce from the Young inequality that
\[\norm{u(t,.)}_{H^{2r}(\Omega)}\leq CT^{\alpha(1-r)-1}\norm{f}_{L^1(0,T;L^2(\Omega))}.\]
Combining this estimate with \eqref{t2g} - \eqref{t2f}, we deduce \eqref{st3} for $u_0=u_1=0$. This completes the proof of Theorem \ref{t2}.

\section{Local solutions of \eqref{eq2}}
In this section we will apply the results of the previous section to prove Theorem \ref{t4} and Corollary \ref{c1}. 
\ \\
\textbf{Proof of Theorem \ref{t4}} 
Note first that for $\gamma$ and $b$ given by \eqref{t4a} and \eqref{t4b},
we have $\gamma< \frac{d}{4}$ and
\[
\frac{d}{d-4\gamma}=b>\frac{d\alpha}{d\alpha+4(1-\alpha)},
\]
which implies by $1<\alpha<2$ and $d=2,3$ that 
\begin{equation}\label{pt4a}
\gamma>1-\frac{1}{\alpha}.
\end{equation}
On the other hand, for $1-\frac{1}{\alpha}<\gamma<\frac{d}{4}$, 
one can check that
\begin{equation}\label{tt4a} 
\gamma <\frac{d\alpha}{4+d\alpha}\Longleftrightarrow {d(b-1)\over 4b} <\frac{d\alpha}{4+d\alpha}\Longleftrightarrow b<{d\alpha+4\over d\alpha+4(1-\alpha)}.
\end{equation}
Therefore, $\gamma$ given by \eqref{t4b}  fulfills
\[
1-\frac{1}{\alpha}<\gamma<\frac{d\alpha}{4+d\alpha},
\]
which yields
$$
\frac{1}{1-\alpha\left(1-\frac{d\alpha}{4+d\alpha}\right)}
< \frac{1}{1-\alpha(1-\gamma)}.
$$
Therefore, we can choose $p$ satisfying
\[b<{d\alpha+4\over d\alpha+4(1-\alpha)}=\frac{1}{1-\alpha(1-\frac{d\alpha}{4+d\alpha})}< p<\frac{1}{1-\alpha(1-\gamma)}.\]
Moreover, for $q$ given by \eqref{t4b} we have $q=\frac{2d}{d-4\gamma}$. Thus, for $q,\gamma$ given by \eqref{t4b} and $b<p<\frac{1}{1-\alpha(1-\gamma)}$, $p,q,\gamma$  fulfill conditions \eqref{st1} and \eqref{st2} with $p>b$. 
Provided that $0 < T \le T_0$ and $M>0$ will be chosen suitably later,
we set $Y_T=L^p(0,T;L^q(\Omega))\cap \mathcal C([0,T]; H^{2r}(\Omega))$ 
and $B_M=\{u\in Y_T:\ \norm{u}_{Y_T}\leq M\}$.
Moreover, we set 
\[
\norm{u}_{Y_T}=\norm{u}_{L^p(0,T;L^q(\Omega))}+\norm{u}_{\mathcal C([0,T]; 
H^{2r}(\Omega))}.
\]
We fix the constant $C_b'>0$ which appears in estimates \eqref{loc1} and 
\eqref{loc2}.  We note that $C_b'$ is independent of $T$.
We put $C'=C_0(1+T_0)^\delta$, where the constants $C_0,\delta$ are
introduced in \eqref{tt2a}, \eqref{del} and are 
independent of $T$.
Finally we fix $C=C'(1+C_b')+1$.
Since $p>b$,  for all $u\in Y_T$ we have $u\in L^b(0,T; L^{2b}(\Omega))$. Therefore, in view of Theorem \ref{t2} and estimates \eqref{st3}, \eqref{tt2a} and \eqref{loc1}, we have $\mathcal G_b(u)\in Y_T$ and 
\begin{equation}\label{t4c}
\begin{aligned}\norm{\mathcal G_b(u)}_{Y_T}&\leq  C'(\norm{u_0}_{H^{2\gamma}(\Omega)}+\norm{u_1}_{H^{2s}(\Omega)}+\norm{f_b(u)}_{L^1(0,T;L^2(\Omega))})\\
\ &\leq C'(\norm{u_0}_{H^{2\gamma}(\Omega)}+\norm{u_1}_{H^{2s}(\Omega)}+C_b\norm{u}_{L^b(0,T;L^{2b}(\Omega))}^b)\\
\ &\leq C(\norm{u_0}_{H^{2\gamma}(\Omega)}+\norm{u_1}_{H^{2s}(\Omega)}+\norm{u}_{L^b(0,T;L^{2b}(\Omega))}^b).\end{aligned}
\end{equation}
On the other hand, by the H\"older inequality one can check that
\[
\int_0^T\norm{u(t,.)}_{L^q(\Omega)}^bdt\leq \left(\int_0^T\norm{u(t,.)}_{L^q(\Omega)}^pdt\right)^{\frac{b}{p}}T^{1-\frac{b}{p}},
\]
which implies
\begin{equation}\label{t4d}
\norm{u}_{L^b(0,T;L^{2b}(\Omega))}\leq T^{\frac{p-b}{bp}}\norm{u}_{L^p(0,T; L^q(\Omega))}.
\end{equation}
Applying this estimate to \eqref{t4c}, we obtain 
\begin{equation}\label{t4e}
\norm{\mathcal G_b(u)}_{Y_T}\leq C(\norm{u_0}_{H^{2\gamma}(\Omega)}+\norm{u_1}_{H^{2s}(\Omega)}+T^{\frac{p-b}{p}}\norm{u}_{Y_T}^b).
\end{equation}
We set $M=2C(\norm{u_0}_{H^{2\gamma}(\Omega)}+\norm{u_1}_{H^{2s}(\Omega)})$ and $T=\min\left((3CM^{b-1})^{-\frac{p}{p-b}},T_0\right)$. With these values of $M$ and $T$, one can easily verify that \eqref{t4e} implies
\[\norm{\mathcal G_bu}_{Y_T}\leq M,\quad u\in B_M.\]
In the same way, applying estimates \eqref{loc2} and \eqref{st3} 
in 
\[
\mathcal G_bu-\mathcal G_bv=\int_0^tS_3(t-s)[f_b(u(s))-f_b(v(s))]ds,
\]
we obtain
\[
\norm{\mathcal G_bu-\mathcal G_bv}_{Y_T}\leq C\norm{u-v}_{L^b(0,T;L^{2b}(\Omega))}(\norm{u}_{L^b(0,T;L^{2b}(\Omega))}^{b-1}+\norm{v}_{L^b(0,T;L^{2b}(\Omega))}
^{b-1}).
\]
Then, \eqref{t4d} and the choice of $T$ imply that for every $u,v\in B_M$, 
we have
\[
\begin{aligned}
\norm{\mathcal G_bu-\mathcal G_bv}_{Y_T}&\leq CT^{\frac{p-b}{p}}\norm{u-v}_{L^p(0,T;L^{q}(\Omega))}(\norm{u}_{L^p(0,T;L^{q}(\Omega))}^{b-1}+\norm{v}_{L^p(0,T;L^{q}(\Omega))}^{b-1})\\
\ &\leq 2CM^{b-1}T^{\frac{p-b}{p}}\norm{u-v}_{Y_T}\\
\ &\leq \frac{2}{3}\norm{u-v}_{Y_T}.
\end{aligned}
\]
Therefore, $\mathcal G_b$ is a contraction from $B_M$ to $B_M$. Consequently  $\mathcal G_b$ admits a unique fixed point $u\in B_M$ which is a local weak solution of \eqref{eq2}. Moreover, from our choice of $M$ and $T$ we deduce 
\eqref{Tt4a} and \eqref{Tt4b}.

Now we show that this solution is unique in $L^p(0,T;L^q(\Omega))$. For this purpose, consider the space $Z_T=\mathcal C([0,T];L^2(\Omega))\cap L^p(0,T;L^q(\Omega))$ with the norm
\[\norm{v}_{Z_T}=\norm{v}_{\mathcal C([0,T];L^2(\Omega))}+\norm{v}_{L^p(0,T;L^q(\Omega))},\quad v\in Z_T.\]
Repeating our previous arguments we can show that  $\mathcal G_b$ is a contraction from $B_M'$ to $B_M'$ with $B_M'=\{u\in Z_T:\ \norm{u}_{Z_T}\leq M\}$. Therefore, the fixed point $u\in B_M$ of $\mathcal G_b$ is a unique local weak 
solution of \eqref{eq2} lying in $L^p(0,T;L^q(\Omega))$. Now let us show that the unique weak solution of \eqref{eq2} lying in $L^p(0,T;L^q(\Omega))$ is also lying in $W^{1,\ell}(0,T; L^2(\Omega))$ and it fulfills \eqref{Tt4c}. 
Since $\norm{u}_{Z_T}\leq M$ and 
$T=\min\left((3CM^{b-1})^{-\frac{p}{p-b}},T_0\right)$,
by \eqref{loc1}, we obtain that $f_b(u)\in L^1(0,T;L^2(\Omega))$ satisfies 
\begin{equation}\label{t4f}
\norm{f_b(u)}_{L^1(0,T;L^2(\Omega))}\leq C(\norm{u_0}_{H^{2\gamma}(\Omega)}+\norm{u_1}_{H^{2s}(\Omega)}).
\end{equation}
Now let us set 
\[
f_k(t)=\left\langle f_b(u(t)),\phi_k\right\rangle,\ u_{0,k}=\left\langle u_0,
\phi_k\right\rangle,\ u_{1,k}=\left\langle u_1,\phi_k\right\rangle.
\]
Then, in view of Lemma \ref{l3}, $u_k(t)=\left\langle u(t),\phi_k\right\rangle\in W^{1,\ell}(0,T)$ fulfills  \eqref{l3a}. Repeating the arguments used in the last part of the proof of Theorem \ref{t1}, we obtain
\[
\begin{aligned}
\norm{\sum_{k=m}^n (\partial_t u_k)\phi_k}_{L^\ell(0,T;L^2(\Omega))}
\leq& C\norm{\sum_{k=m}^n\frac{(\lambda_kt^\alpha)^{1-\gamma}}
{1+(\lambda_k^{\frac{1}{\alpha}}t)^\alpha}t^{\alpha\gamma-1}
\lambda_k^\gamma u_{0,k}\phi_k}_{L^\ell(0,T;L^2(\Omega))}
+CT^{\frac{1}{\ell}}\norm{\sum_{k=m}^nu_{1,k}\phi_k}_{L^2(\Omega)}\\
&+ C\norm{\int_0^t(t-s)^{\alpha-2}\norm{\sum_{k=m}^nf_k(s)\phi_k}
_{L^2(\Omega)}ds}_{L^\ell(0,T)}
\end{aligned}
\]
for all $m,n\in\mathbb N^*$.
In view of \eqref{pt4a}, we have
\[
\ell(\alpha\gamma-1)>\frac{\alpha-2}{2-\alpha}>-1.
\]
Therefore, the Young inequality yields
\[
\begin{aligned}
\norm{\sum_{k=m}^n(\partial_t u_k)\phi_k}_{L^\ell(0,T;L^2(\Omega))}
\leq& C\left(\frac{T^{\ell(\alpha\gamma-1)+1}}{\ell(\alpha\gamma-1)+1}\right)
^{\frac{1}{\ell}}\norm{\sum_{k=m}^nu_{0,k}\phi_k}_{L^2(\Omega)}+CT^{\frac{1}{\ell}}\norm{\sum_{k=m}^nu_{1,k}\phi_k}_{L^2(\Omega)}\\
&+ C\left(\frac{T^{\ell(\alpha-2)+1}}{\ell(\alpha-2)+1}\right)^{\frac{1}{\ell}}
\norm{\sum_{k=m}^nf_k(s)\phi_k}_{L^1(0,T;L^2(\Omega))}.
\end{aligned}
\]
Thus, we have 
\[
\lim_{m,n\to+\infty}\norm{\sum_{k=m}^n(\partial_t u_k)\phi_k}
_{L^\ell(0,T;L^2(\Omega))}=0,
\]
which means that $\underset{k\geq1}{\sum}(\partial_tu_k)(t)\phi_k(x)$ is a 
Cauchy sequence and is a convergent sequence in $L^\ell(0,T;L^2(\Omega))$. 
Combining this with the fact that $\underset{k\geq1}{\sum}u_k(t)\phi_k(x)$ 
converge to $u$ in $\mathcal C([0,T]; L^2(\Omega))$, we deduce that 
$\underset{k\geq1}{\sum}u_k(t)\phi_k(x)$ converge to $u$ in $W^{1,\ell}
(0,T;L^2(\Omega))$. Finally,  for all $N\in\mathbb N^*$, we find
\[
\begin{aligned}
\norm{\sum_{k=1}^Nu_k\phi_k}_{W^{1,\ell}(0,T;L^2(\Omega))}
\leq& C(\norm{u_0}_{H^{2\gamma}(\Omega)}+\norm{u_1}_{H^{2s}(\Omega)}+\norm{f_b(u)}_{L^1(0,T;L^2(\Omega))}).
\end{aligned}
\]
Combining this estimate with \eqref{t4f} and letting $N\to \infty$, 
we deduce \eqref{Tt4c}.  Thus, the proof of Theorem 1.5 is completed.
\qed

\textbf{Proof of Corollary \ref{c1}.} 
Let $T>0$ fulfill \eqref{c1a} and set $T_0=T$. Without lost of generality we can assume that $T\geq 1$. Then, we have
\[
\left(\tilde{C}_0T_0^{\frac{\delta}{b-1}}(\norm{u_0}_{H^{2\gamma}(\Omega)}
+\norm{u_1}_{H^{2s}(\Omega)})\right)^{-\frac{p(b-1)}{p-b}}> T_0.
\]
Since $T_0\geq 1$ we can replace $T_0$ by $T_0+1$ in condition \eqref{Tt4a}.  
Therefore, with this value of $T_0$, condition \eqref{Tt4a} holds. Thus, according to Theorem \ref{t4}, problem \eqref{eq2} admits a  unique  weak solution $u$ on $(0,T)$ lying in $L^p(0,T;L^q(\Omega))\cap \mathcal C([0,T]; H^{2r}(\Omega))\cap W^{1,\ell}(0,T;L^2(\Omega))$.
\qed

{\bf Acknolwedgements.}
The second author is partially supported by Grant-in-Aid for Scientific 
Research (S) 15H05740 of Japan Society for the Promotion of Science.

\end{document}